\documentclass[11pt]{amsart}
\usepackage{latexsym,amsfonts,amssymb}
\usepackage{amsmath}

\usepackage[top=30truemm,bottom=30truemm,left=25truemm,right=25truemm]{geometry}

\usepackage{amsthm}
\usepackage{indentfirst}
\usepackage{enumerate}
\usepackage{braket}
\usepackage{verbatim}
\usepackage{color}
\usepackage{colordvi,multicol}
\usepackage[marginal]{footmisc}
\IfFileExists{bbm.sty}{\usepackage{bbm}}{}
\usepackage[numbers]{natbib}
\usepackage[backref]{hyperref}
\hypersetup{
  pdftitle={Global L2 Approximation in the Bergman Spaces with Values in Holomorphic Vector Bundles},
  pdfauthor={Bo Xiao and Tianlong Yu}
}
\usepackage{lineno}

\usepackage{fancyhdr}

\allowdisplaybreaks[4]
\mathchardef\modelmathcomma=\mathcode`\,
\begingroup
\catcode`\,=\active
\gdef,{\modelmathcomma\allowbreak}
\endgroup
\AtBeginDocument{\mathcode`\,="8000\relax}

\makeatletter

\newcommand{\Rmnum}[1]{\expandafter\@slowromancap\romannumeral #1@}
\makeatother

\def \4l{\ \ \ \  }

\newtheorem{thm}{Theorem}
\newtheorem{theorem}[thm]{Theorem}

\numberwithin{equation}{section}
\numberwithin{thm}{section}
\date{}

\newtheorem{lemma}[thm]{Lemma}
\newtheorem{proposition}[thm]{Proposition}
\theoremstyle{definition}
\newtheorem{definition}{Definition}
\theoremstyle{plain}

\newtheorem*{thesisresult}{Theorem}
\newtheorem*{comparisonresult}{Proposition}
\theoremstyle{definition}
\newtheorem*{lempertmetric}{Definition}
\theoremstyle{plain}

\title{Global $L^2$ Approximation in the Bergman Spaces with Values in Holomorphic Vector Bundles}
\author[B. Xiao]{Bo Xiao\textsuperscript{2}}
\author[T. Yu]{Tianlong Yu\textsuperscript{1,*}}

\address[1]{Tianjin Renai College, Tianjin, China}
\email[Tianlong Yu]{yutianlong940309@163.com} 
\address[2]{School of Mathematical Sciences, University of Chinese Academy of Sciences, Beijing, China}
\email[Bo Xiao]{xb1994@amss.ac.cn}
\thanks{\textsuperscript{*}Corresponding author.}

\begin{document}

\begin{abstract}
In this paper, we get global $L^2$ approximation theorems in Bergman spaces with values in singular holomorphic vector bundles over Stein manifolds generalizing results of Li and Zhou. The main results combine strong openness property of multiplier submodule sheaf and twisted $L^2$ estimates of $\bar\partial$-equations.

\end{abstract}

\maketitle

\section{Introduction}
Let $D$ be a domain in $\mathbb{C}^n$, and $\phi$ a measurable function on $D$. Let
\begin{equation}
    A^2(D,\phi):=\{f\in\mathcal{O}(D)| \|f\|_{\phi}^2:=\int_D|f|^2e^{-\phi}d\lambda_n<\infty\}
\end{equation}
denotes the space of holomorphic functions on $D$ which are square integrable with respect to measure $e^{-\phi}d\lambda_n$, where $d\lambda_n$ is the Lebesgue measure on $\mathbb{C}^n$. Let $\phi_j$ be an increasing sequence of psh functions on $D$. Assume $\phi:=\lim_{j\to\infty}\phi_j\in psh(D)$, and $f\in A^2(D,\phi)$. Then strong openness property of psh functions implies that $|f|^2e^{-\phi_j}$ converges to $|f|^2e^{-\phi}$ in $L^1_{loc}$ sense.(\cite{GZ(2015b)}) Hence it is natural to ask whether we can get global convergence, i.e. $\lim_{j\to\infty}\||f|^2e^{-\phi_j}-|f|^2e^{-\phi}\|_{L^1(D)}=0$? Unfortunately, it is not always true. In fact, one may have strict inclusion: 
\begin{equation}
\cup_{j=1}^{\infty}A^2(D,\phi_j)\subsetneq A^2(D,\phi).    
\end{equation}
Acturally, in their joined paper\cite{FW(2018)}, Fornaess and Wu showed that for $D=\mathbb{C}^n$ and some toric psh functions $\phi_j$ increasing to $\phi$, one can have the above strict inclusion. Next natural question is that whether $\cup_{j=1}^{\infty}A^2(D,\phi_j)$ lies dense in $A^2(D,\phi)$? 

The study of this question has a long history. For the case of $D=\mathbb{C}^n$, in 1971, B.A.Taylor(\cite{Taylor(1971)}) proved the following
\begin{theorem}
Let $\phi_j$ be an increasing sequence of psh functions on $\mathbb{C}^n$. Let $\phi = \lim\limits_{j\rightarrow+\infty} \phi_j$. Assume $\int_Ke^{-\phi_1}d\lambda_n<\infty$ for every compact set K.  
Then the closure of $\cup_{j=1}^{\infty}A^2(\mathbb{C}^n,\phi_j+\log(1+\|z\|^2))$in the Hilbert space $A^2(\mathbb{C}^n,\phi+\log(1+\|z\|^2))$ contains $A^2(\mathbb{C}^n,\phi)$. 
\end{theorem}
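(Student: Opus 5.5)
Given $f\in A^2(\mathbb{C}^n,\phi)$ and $\varepsilon>0$, I will produce $F_j\in A^2(\mathbb{C}^n,\phi_j+\log(1+\|z\|^2))$ with $\|F_j-f\|_{\phi+\log(1+\|z\|^2)}\to 0$. The plan is cutoff plus $\bar\partial$-correction with Hörmander's estimate.

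\textbf{Step 1 (truncation).} Take a cutoff $\chi_R(z)=\chi(\|z\|/R)$, with $\chi\equiv1$ on $[0,1]$ and $\chi\equiv 0$ on $[2,\infty)$, and $|\nabla\chi_R|\lesssim 1/R$. Since $\phi\ge\phi_j$ and $f\in A^2(\phi)$, for fixed $R$ the function $f$ is in $L^2(B_{2R},e^{-\phi_j})$ for $j$ large. This uses strong openness of psh functions, i.e. $\int_K|f|^2e^{-\phi_j}\to\int_K|f|^2e^{-\phi}$ on compacts (\cite{GZ(2015b)}). The hypothesis $\int_K e^{-\phi_1}<\infty$ guarantees the relevant multiplier sheaves are trivial locally, so this holds for all large $j$ and in fact yields $L^1$ convergence on $B_{2R}$.

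\textbf{Step 2 ($\bar\partial$-correction).} Solve $\bar\partial u_j=v:=f\bar\partial\chi_R$ by Hörmander's $L^2$ estimate on $\mathbb{C}^n$. Use the weight $\psi_j=\phi_j+\log(1+\|z\|^2)$, which is psh, and add the strictly psh term $\|z\|^2$ or use $\log(1+\|z\|^2)$ itself to get curvature. Hörmander's theorem with weight $\phi_j+2\log(1+\|z\|^2)$ (curvature $\ge c(1+\|z\|^2)^{-2}$) gives
\[
\int|u_j|^2e^{-\phi_j}(1+\|z\|^2)^{-2}\le C\int |v|^2e^{-\phi_j}(1+\|z\|^2)^{2}(1+\|z\|^2)^{-2}.
\]
Since $v$ is supported in $R\le\|z\|\le 2R$, where $|\bar\partial\chi_R|^2\lesssim R^{-2}\sim(1+\|z\|^2)^{-1}$, I choose the weight $\phi_j+\log(1+\|z\|^2)$ and the Hörmander variant $\int|u|^2e^{-\varphi}(1+\|z\|^2)^{-1}\lesssim\int|v|^2e^{-\varphi}(1+\|z\|^2)$ (eigenvalue $\ge (1+\|z\|^2)^{-1}$ from $\log(1+\|z\|^2)$). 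This gives
\[
\int|u_j|^2e^{-\phi_j}(1+\|z\|^2)^{-1}\lesssim \int_{R\le\|z\|\le2R}|f|^2e^{-\phi_j}.
\]
The factor $(1+\|z\|^2)^{-1}$ lost here is exactly why the theorem measures in $\phi+\log(1+\|z\|^2)$.

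\textbf{Step 3 (estimate).} Set $F_j=\chi_Rf-u_j$. This is entire and lies in $A^2(\phi_j+\log(1+\|z\|^2))$. Since $\phi\ge\phi_j$,
\[
\|F_j-f\|^2_{\phi+\log(1+\|z\|^2)}\lesssim \int_{\|z\|\ge R}|f|^2e^{-\phi}+\int_{R\le\|z\|\le 2R}|f|^2e^{-\phi_j}.
\]
By Step 1, as $j\to\infty$ the last term tends to $\int_{R\le\|z\|\le2R}|f|^2e^{-\phi}$. Choosing $R$ large and then $j$ large makes everything small.

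The main obstacle is Step 1: that the integral $\int_{B_{2R}}|f|^2e^{-\phi_j}$ is finite and converges for fixed $R$. This needs the strong openness property (Guan--Zhou) together with monotone convergence. An alternative, if one wants to avoid it, is to replace $\phi_j$ by $\max(\phi_j,\phi-\delta)$-type regularizations, but I would first try the direct appeal to \cite{GZ(2015b)}.
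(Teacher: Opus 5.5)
The paper does not prove this statement. It quotes it from Taylor (1971) as background, so there is no proof to compare against. Your argument is correct. It runs the same cutoff, $\bar\partial$-correction and local-convergence scheme that the paper uses for Theorem~\ref{thm:main}, but two points in your write-up need fixing.

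First, the parenthetical in Step 2 is wrong. The form $i\partial\bar\partial\log(1+\|z\|^2)$ has eigenvalue $(1+\|z\|^2)^{-1}$ only in directions orthogonal to $z$; in the radial direction the eigenvalue is $(1+\|z\|^2)^{-2}$. With the correct lower bound $(1+\|z\|^2)^{-2}$, H\"ormander's estimate with weight $\phi_j+\log(1+\|z\|^2)$ gives exactly the inequality you display: $\int|u|^2e^{-\phi_j}(1+\|z\|^2)^{-1}\le\int|v|^2(1+\|z\|^2)e^{-\phi_j}$. The bound $|\bar\partial\chi_R|^2\lesssim R^{-2}$ on $\{R\le\|z\|\le 2R\}$ then absorbs the factor $(1+\|z\|^2)$, so your conclusion stands.

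Second, Step 1 is not an obstacle and needs no strong openness. On $B_{2R}$, for every $j$,
\[
|f|^2e^{-\phi_j}\le\bigl(\sup_{B_{2R}}|f|^2\bigr)\,e^{-\phi_1}\in L^1(B_{2R}),
\]
and $e^{-\phi_j}\to e^{-\phi}$ pointwise. Dominated convergence therefore gives $\int_{R\le\|z\|\le2R}|f|^2e^{-\phi_j}\to\int_{R\le\|z\|\le2R}|f|^2e^{-\phi}$. This is exactly what the hypothesis $\int_Ke^{-\phi_1}<\infty$ is for; Taylor's theorem predates strong openness. It is also the right argument, because Taylor does not assume the limit $\phi$ is psh. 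For example, $\phi_j=j\max(0,\mathrm{Re}\,z_1)$ increases to a function equal to $+\infty$ on a half-space. The Guan--Zhou convergence theorem you cite is stated for a psh limit.

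Compared with the paper's method: in Lemma~\ref{lem:cutoff} the cutoff is logarithmic, $\chi(\log(1+\Psi)/c)$. The $\bar\partial$-equation is solved with weight $\phi_c=\Psi-\frac1c\log(1+\Psi)$, whose Levi form contains the gradient term $\frac{1}{c(1+\Psi)^2}i\partial\Psi\wedge\bar\partial\Psi$. That term pays exactly for $\bar\partial$ of the cutoff, so the correction costs $O(1/c)$ times the full twisted norm $\int|f|^2_he^{-\Psi}$, with no loss of weight. With $\Psi=\log(1+\|z\|^2)$ this gives density of $\bigcup_jA^2(\phi_j+\log(1+\|z\|^2))$ in all of $A^2(\phi+\log(1+\|z\|^2))$ (the Fornaess--Wu result). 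That contains Taylor's statement whenever $\phi$ is psh and needs no integrability of $e^{-\phi_1}$, but it requires strong openness and a psh limit. Your linear cutoff with plain H\"ormander loses a full factor $(1+\|z\|^2)$, so your error is controlled only by the unweighted tail $\int_{\|z\|\ge R}|f|^2e^{-\phi}$. That is why you need $f\in A^2(\phi)$ rather than $f\in A^2(\phi+\log(1+\|z\|^2))$, which is precisely Taylor's formulation. In exchange, your route is elementary and also covers non-psh limits.
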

Here $\|z\|^2 := |z_1|^2 +|z_2|^2 +\cdots + |z_n|^2$. After the strong openness property of psh functions be proved, Fornaess and Wu obtained the following theorem:
\begin{theorem}(\cite{FW(2018)} Theorem 1.4)\label{FW}
Let $\phi_j$ be an increasing sequence of psh functions on $\mathbb{C}^n$. Assume $\phi = \lim\limits_{j\rightarrow+\infty} \phi_j\in psh(\mathbb{C}^n)$. Then for any $\epsilon>0$, $\cup_{j=1}^{\infty}A^2(\mathbb{C}^n,\phi_j+\epsilon\log(1+\|z\|^2))$ lies dense in $A^2(\mathbb{C}^n,\phi+\epsilon\log(1+\|z\|^2))$.    
\end{theorem}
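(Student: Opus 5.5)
Write $\psi:=\epsilon\log(1+\|z\|^2)$ and fix $f\in A^2(\mathbb{C}^n,\phi+\psi)$. The plan is to cut $f$ off on a large ball, correct the cutoff by solving a $\bar\partial$-equation with Hörmander's $L^2$ estimate for the weight $\phi_j+\psi$ plus a small extra term, and use strong openness on the fixed compact support of $\bar\partial\chi$. For $R>0$ let $\chi_R(z)=\chi(\|z\|/R)$, where $\chi\equiv1$ on $[0,1]$, $\chi\equiv0$ on $[2,\infty)$ and $|\chi'|\le2$. Then $\|f-\chi_Rf\|_{\phi+\psi}^2\le\int_{\|z\|>R}|f|^2e^{-\phi-\psi}\to0$. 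It therefore suffices, for fixed large $R$, to find $j$ and $F_j\in A^2(\mathbb{C}^n,\phi_j+\psi)$ with $\|F_j-\chi_Rf\|_{\phi+\psi}$ small.

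\textbf{Step 1: the $\bar\partial$-problem.} Solve $\bar\partial u_j=f\bar\partial\chi_R=:v$. The form $v$ is supported in the annulus $A_R=\{R\le\|z\|\le2R\}$ and satisfies $|\bar\partial\chi_R|\lesssim 1/R$ there. Use the weight $\Phi_j=\phi_j+\psi+n\log(1+\|z\|^2)$, or more simply add a strictly psh term of size $O(1)$ on $A_R$. Hörmander's estimate with a weight $\Phi_j$ having $i\partial\bar\partial\Phi_j\ge c\,\omega$ on the support of $v$ gives $u_j$ with
\begin{equation*}
\int|u_j|^2e^{-\Phi_j}(1+\|z\|^2)^{-2}\lesssim\int_{A_R}|v|^2e^{-\Phi_j}(1+\|z\|^2).
\end{equation*}
One must arrange that the left side dominates $\int|u_j|^2e^{-\phi_j-\psi}$ while the right side stays finite and small. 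A cleaner route uses the twisted (Donnelly--Fefferman/Berndtsson) estimate with weight $\phi_j+\psi$ and twist coming from $\psi$ itself. We have $i\partial\bar\partial\psi\ge \epsilon\, i\partial\bar\partial\log(1+\|z\|^2)\gtrsim \epsilon(1+\|z\|^2)^{-2}\omega$. Hence
\begin{equation*}
\int|u_j|^2e^{-\phi_j-\psi}\le C_\epsilon\int_{A_R}|f|^2|\bar\partial\chi_R|^2(1+\|z\|^2)^{2}e^{-\phi_j-\psi}.
\end{equation*}
The growth factor $(1+\|z\|^2)^2$ against $R^{-2}$ is a problem. To handle it I would use the standard trick of using $\chi$ of $\log\|z\|$ over $[R,R^{N}]$, so that $|\bar\partial\chi|\lesssim 1/(\|z\|\log R)$. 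Alternatively, apply the twisted estimate with the full Kähler bound $i\partial\bar\partial\log(1+\|z\|^2)\ge (1+\|z\|^2)^{-1}\omega$ restricted to radial directions. I expect this bookkeeping, getting a right-hand side bounded by $o(1)\cdot\int_{A_R}|f|^2e^{-\phi_j-\psi}$ uniformly in $j$, to be the main technical obstacle.

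\textbf{Step 2: strong openness and the limit in $j$.} Set $F_j=\chi_Rf-u_j$, which is holomorphic. Since $\phi_j\le\phi$, we have $\|u_j\|_{\phi+\psi}\le\|u_j\|_{\phi_j+\psi}$. Also $\chi_Rf\in L^2(e^{-\phi_j-\psi})$ by strong openness, because the statement quoted in the introduction gives $|f|^2e^{-\phi_j}\to|f|^2e^{-\phi}$ in $L^1_{loc}$ and the support $\{\|z\|\le2R\}$ is compact. Thus $F_j\in A^2(\mathbb{C}^n,\phi_j+\psi)$. Moreover the right side of the estimate in Step 1 converges, as $j\to\infty$, to the same integral with $\phi$ in place of $\phi_j$, again by $L^1_{loc}$ convergence on the compact annulus. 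That limit is $\lesssim \delta(R)\int_{\|z\|\ge R}|f|^2e^{-\phi-\psi}\to0$ as $R\to\infty$. Choosing $R$ first and then $j$ gives $\|F_j-f\|_{\phi+\psi}<\eta$. This proves density of $\cup_jA^2(\mathbb{C}^n,\phi_j+\psi)$ in $A^2(\mathbb{C}^n,\phi+\psi)$.
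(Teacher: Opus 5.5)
Your overall scheme matches the paper: cut off, correct by a $\bar\partial$-solution, use strong openness only on a compact set, and fix the cutoff scale before choosing $j$. The paper obtains this statement as the rank-one case of Theorem~\ref{thm:main}, with $X=\mathbb{C}^n$, $h_j=e^{-\phi_j}$ and $\Psi=\epsilon\log(1+\|z\|^2)$, and your Step~2 is fine.

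\textbf{The gap is Step~1.} You never obtain an estimate whose right-hand side tends to $0$, and with the weight $\phi_j+\psi$ neither of your repairs can work. The second repair rests on a false inequality. In the radial direction $z/\|z\|$, the eigenvalue of $i\partial\bar\partial\log(1+\|z\|^2)$ is exactly $(1+\|z\|^2)^{-2}$. The bound $(1+\|z\|^2)^{-1}$ holds only in the complex-tangential directions, whereas $\bar\partial$ of a radial cutoff is purely radial.

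For radial $g(r)$, $r=\|z\|$, this gives $|\bar\partial g|^2_{i\partial\bar\partial\psi}\approx g'(r)^2r^4/\epsilon$. Your $\log\|z\|$-cutoff therefore produces a factor of order $r^2/(\epsilon\log^2R)$. More generally, if $g$ drops from $1$ to $0$ on $[R,\infty)$, then $1\le\sup(|g'|r^2)\int_R^\infty r^{-2}\,dr$, so $\sup|\bar\partial g|^2_{i\partial\bar\partial\psi}$ is at least of order $R^2/\epsilon$. No radial cutoff yields the pointwise factor $\delta(R)\to0$ that Step~2 presupposes. Geometrically, on each complex line through $0$ the form $i\partial\bar\partial\psi$ is $\epsilon$ times the Fubini--Study metric, in which infinity lies at finite distance.

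A Berndtsson-type twist by a multiple of $\log(1+\|z\|^2)$ fails for the same reason. Radially, the squared gradient of the twist is of order $\|z\|^{-2}$, while the available curvature is of order $\|z\|^{-4}$. So the hypothesis of Theorem~\ref{th:Hormander1} breaks down near infinity.

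\textbf{The missing idea is to modify the \emph{weight}, not just the cutoff.} Put $\Psi:=\psi+1>0$. Lemma~\ref{lem:cutoff} solves the equation with weight $\phi_j+\phi_c$, where $\phi_c=\Psi-\frac1c\log(1+\Psi)$. Then $i\partial\bar\partial\phi_c\ge\frac{1}{c(1+\Psi)^2}\,i\partial\Psi\wedge\bar\partial\Psi$, which supplies curvature exactly along $\bar\partial\Psi$.

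Take the cutoff $\chi(\log(1+\Psi)/c)$, which is a cutoff in $\log\log\|z\|$ rather than $\log\|z\|$. The form $\lambda=\chi'\frac{1}{c(1+\Psi)}f\wedge\bar\partial\Psi$ then satisfies $\langle B^{-1}\lambda,\lambda\rangle\le\frac{|\chi'|^2}{c}|f|^2$. This follows from the rank-one identity $\langle[i\partial\Psi\wedge\bar\partial\Psi,\Lambda_\omega]^{-1}(f\wedge\bar\partial\Psi),f\wedge\bar\partial\Psi\rangle=|f|^2$.

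Moreover $e^{-\phi_c}\ge e^{-\Psi}$ everywhere, and $e^{-\phi_c}=(1+\Psi)^{1/c}e^{-\Psi}\le e^2e^{-\Psi}$ on $\{\Psi<e^{2c}-1\}$. Hörmander's estimate therefore gives
\[
\int_{\mathbb{C}^n}|u|^2e^{-\phi_j-\Psi}\,d\lambda_n\le\frac{4e^2}{c}\int_{\{\Psi<e^{2c}-1\}}|f|^2e^{-\phi_j-\Psi}\,d\lambda_n .
\]
The smallness comes from the factor $1/c$, not from the tail of $f$. Strong openness is used only to choose $j_c$ so that the integral over the compact set $\{\Psi<e^{2c}-1\}$ is at most twice its value for $\phi$. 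After that, your Step~2 goes through with $R$ replaced by $c$.
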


For a Holomorphic line bundle $L$ over a Stein manifold $X$ with a K\"{a}hler metric $\omega$ and a (singular) Hermitian metric $h$ on $L$, we denote $L^2(X,K_X\otimes L,h)$ the set of measurable sections $\alpha$ on $X$ of $K_X\otimes L$ such that
\begin{equation}
    \|\alpha\|^2_{h}:=\int_X\{\alpha,\alpha\}_h=\int_X|\alpha|^2_{\omega,h}dV_{\omega}<\infty
\end{equation}
And $A^2(X,K_X\otimes L,h)$ denotes the space of holomorphic elements in $L^2(X,K_X\otimes L,h)$. We call this space the Bergman space with value in a holomorphic line bundle.

In \cite{Li(2022)}, by using more advanced $L^2$ estimates established by Guan and Zhou (\cite{GZ(2015a)}), Li and Zhou generalized Theorem \ref{FW} in the above settings.
\begin{theorem}(\cite{Li(2022)} Theorem 1.4)\label{LiZhou1}
Let $(X,\omega)$ be a Stein manifold with a K\"{a}hler metric $\omega$, $\psi$ be a psh exhaustion function with a lower bound on $X$. Let $L$ be a holomorphic line bundle on $X$ and $\{h_k\}$, $h$ be a family of (singular) Hermitian metrics on $L$ such that

(1)$i\Theta_{(L,h_k)}\geq0$ and $i\Theta_{(L,h)}\geq0$ in the sense of currents.

(2)For any local holomorphic section $v$ of $K_X\otimes L, |v|_{\omega,h_k}$decreasingly converges to $|v|_{\omega,h}$.

Then  $\cup_{k=1}^{\infty}A^2(X,K_X\otimes L,h_ke^{-\psi})$is dense in $A^2(X,K_X\otimes L,he^{-\psi})$.
\end{theorem}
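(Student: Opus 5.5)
We approximate a given $f\in A^2(X,K_X\otimes L,he^{-\psi})$ in three steps: cut $f$ off to a compact sublevel set of $\psi$, use strong openness to compare the $h_k$-norms with the $h$-norm on that compact set, and then correct the cut-off section back to a holomorphic one by solving a $\bar\partial$-equation with $L^2$ estimates. Fix $\varepsilon>0$. Since $\psi$ is a psh exhaustion bounded below, the sublevel sets $X_c=\{\psi<c\}$ are relatively compact and exhaust $X$. Choose $c$ large so that $\int_{X\setminus X_{c-1}}|f|^2_{\omega,h}e^{-\psi}dV_\omega<\varepsilon$.

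Let $\chi$ be a smooth cut-off on $\mathbb{R}$ with $\chi=1$ on $(-\infty,c-1]$, $\chi=0$ on $[c,\infty)$ and $|\chi'|\le 2$. Consider the smooth section $\chi(\psi)f$. We would like to solve
\[
\bar\partial u_k=\bar\partial(\chi(\psi)f)=\chi'(\psi)\,\bar\partial\psi\wedge f
\]
with estimates in the metric $h_k e^{-\psi}$, twisted by an additional weight, and then take $F_k=\chi(\psi)f-u_k$.

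\textbf{Local convergence.} By hypothesis (2), $|f|^2_{\omega,h_k}$ increases to $|f|^2_{\omega,h}$. On the compact set $\overline{X_c}$ the strong openness property of Guan--Zhou, applied locally to the weights of $h_k$ (equivalently, monotone convergence together with $f\in L^2(h)$), gives
\[
\int_{X_c}|f|^2_{\omega,h_k}e^{-\psi}\to\int_{X_c}|f|^2_{\omega,h}e^{-\psi}.
\]
In particular, $f\in L^2(X_c,h_k)$ for all $k$ and the difference is small for $k$ large. Because the norms increase in $k$, we also have the comparison $|\cdot|_{h_k}\le|\cdot|_{h}$.

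\textbf{$\bar\partial$ step (main obstacle).} The right-hand side is supported in $\{c-1\le\psi\le c\}$, where $f$ is small in the $h$-norm. Its size, however, must be measured against $i\partial\bar\partial\psi$, which may degenerate. The remedy is a twisted estimate à la Guan--Zhou (or Ohsawa--Takegoshi/Donnelly--Fefferman type) with an auxiliary weight $\eta(\psi)$. We take $\eta$ convex increasing, e.g.\ $\varphi=\psi+s(\psi)$, chosen so that $i\partial\bar\partial(\varphi)\ge s''(\psi)\,i\partial\psi\wedge\bar\partial\psi$ absorbs $|\chi'(\psi)\bar\partial\psi|^2$. Concretely, for the metric $h_ke^{-\psi}$, with $i\Theta_{h_k}\ge0$ and $X$ Stein (so completeness can be arranged by exhausting with Stein subdomains), we seek $u_k$ satisfying
\[
\int_X|u_k|^2_{h_k}e^{-\psi}\le C\int_{\{c-1\le\psi\le c\}}|f|^2_{h_k}e^{-\psi}\le C\int_{X\setminus X_{c-1}}|f|^2_{h}e^{-\psi}<C\varepsilon.
\]
Here $C$ is independent of $c$ and $k$; this is exactly what Guan--Zhou's twisted estimate with $\chi'$ bounded yields. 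We also need the curvature hypothesis on $h_k$ to hold in the sense of currents for singular metrics, which we handle by local regularization and a limiting argument.

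\textbf{Conclusion.} $F_k$ is holomorphic and lies in $A^2(X,K_X\otimes L,h_ke^{-\psi})$. Its distance to $f$ in the $he^{-\psi}$ norm splits into two contributions.

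The cut-off part is $\|(1-\chi)f\|_h^2<\varepsilon$.

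For the correction, $u_k$ is controlled only in $h_k$, and since $h_k\le h$ this goes the wrong way. We therefore first pass to the weak limit. The $u_k$ are bounded in $L^2(h_{k_0}e^{-\psi})$ for each fixed $k_0$, because $h_{k_0}\le h_k$ for $k\ge k_0$. A diagonal weak limit $u$ then satisfies $\|u\|^2_{h_{k_0}e^{-\psi}}\le C\varepsilon$ for all $k_0$, and by monotone convergence $\|u\|_{he^{-\psi}}^2\le C\varepsilon$. Hence $\chi f-u$ is holomorphic with $h$-distance $O(\sqrt\varepsilon)$ from $f$.

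This limiting step alone lands us in $A^2(he^{-\psi})$, not in $\bigcup_k A^2(h_ke^{-\psi})$. So instead we argue by orthogonality: the functional approach shows that any $g\in A^2(h e^{-\psi})$ orthogonal to all $A^2(h_ke^{-\psi})$ must vanish. We test $g$ against $F_k$ and use that $\langle F_k, g\rangle_h\to\langle\chi f-u,g\rangle$ along the subsequence, which forces $\|g\|^2\lesssim\sqrt\varepsilon\|g\|$. Making this pairing argument rigorous is the delicate point of the proof.
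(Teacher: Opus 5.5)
Your skeleton (cut off on a sublevel set of $\psi$, apply strong openness there, correct by a weighted $\bar\partial$-solution) matches the paper's, but there are two genuine gaps.

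\textbf{The direction of hypothesis (2).} You have it backwards. The hypothesis says $|v|_{\omega,h_k}$ \emph{decreases} to $|v|_{\omega,h}$, so $|\cdot|_{h_k}\ge|\cdot|_h$ (think $h_k=e^{-\phi_k}$ with $\phi_k\nearrow\phi$). That is why $A^2(X,K_X\otimes L,h_ke^{-\psi})\subset A^2(X,K_X\otimes L,he^{-\psi})$, and why the density statement makes sense.

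Consequently, $\int_{X_c}|f|^2_{h_k}e^{-\psi}<\infty$ is not automatic and is not ``monotone convergence.'' It holds only for $k\ge N_c$, and that is exactly what strong openness supplies on the relatively compact set $X_c$. Dominated convergence then lets you fix one $k_c$ with $\int_{X_c}|f|^2_{h_{k_c}}e^{-\psi}\le\int_{X_c}|f|^2_he^{-\psi}+\varepsilon$, and this is what controls the annulus term. Your bound of the $h_k$-annulus integral by the $h$-tail is false for general $k$.

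Your last paragraph (weak limits, landing in the wrong space, and an orthogonality argument you leave non-rigorous) is an artifact of this reversal, so as written the proposal never reaches the conclusion. With the correct inequality none of that is needed. The section $F=\chi(\psi)f-u_{k_c}$ lies in $A^2(X,K_X\otimes L,h_{k_c}e^{-\psi})$, and $\|F-f\|_{he^{-\psi}}\le\|(1-\chi(\psi))f\|_{he^{-\psi}}+\|u_{k_c}\|_{h_{k_c}e^{-\psi}}$. This is how the paper finishes.

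\textbf{The constant in the $\bar\partial$ step.} You assert a constant $C$ independent of $c$ but do not derive it, and the natural weights do not give it. Suppose you absorb $\chi'(\psi)\bar\partial\psi\wedge f$ on $\{c-1\le\psi\le c\}$ with a weight $\psi+s(\psi)$. You need $s$ convex with $s'\ge-1$, $s''\ge\delta$ on $[c-1,c]$, and $s$ bounded above. Convexity and boundedness force $s'\le0$, hence $s'\le-\delta$ on $(-\infty,c-1]$. So $s$ varies by at least $\delta(c-1-\inf\psi)$, and the resulting constant $e^{\sup s-s(c)}/\delta$ is $\gtrsim c$. A Guan--Zhou twist factor $\eta(\psi)$ hits the same obstruction through the concavity of $\eta$. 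Since your $c$ is chosen only to make the tail of $f$ small, $C(c)\varepsilon$ need not tend to $0$.

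The idea you are missing is the paper's logarithmic cutoff over a long range. With $\psi>0$, use $\chi(\log(1+\psi)/c)$, whose gradient lives on $\{e^c-1\le\psi\le e^{2c}-1\}$, together with the weight $\phi_c=\psi-\frac1c\log(1+\psi)$. This weight satisfies $i\partial\bar\partial\phi_c\ge\frac{1}{c(1+\psi)^2}i\partial\psi\wedge\bar\partial\psi$, and $e^{-\psi}\le e^{-\phi_c}\le e^2e^{-\psi}$ on $\{\psi<e^{2c}-1\}$. The correction then has squared norm at most $\frac{4e^2}{c}\int_{\{\psi<e^{2c}-1\}}|f|^2_{h_{k_c}}e^{-\psi}\le\frac{8e^2}{c}M$. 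The smallness comes from the factor $1/c$, not from the tail of $f$.

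Alternatively, your unit-width linear cutoff survives with an $O(c)$ constant. Choose $c$ along a sequence with $c\int_{\{c-1\le\psi\le c\}}|f|^2_he^{-\psi}\to0$; such a sequence exists because these annular integrals are summable.

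Finally, $\psi$ is only psh. So $\bar\partial(\chi(\psi))$ and the weighted estimate require regularizing $\psi$ by smooth strictly psh $\Psi_k\searrow\psi$ and letting $k\to\infty$ with the metric $h_{k_c}$ fixed, as the paper does.
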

If we let $X$ be $\mathbb{C}^n$, $\omega$ be the standard K\"{a}hler metric on $\mathbb{C}^n$, and $L$ be the trivial line bundle with $h=e^{-\phi}$, $h_k=e^{-\phi_k}, \psi=\epsilon\log(1+\|z\|^2)$, then Theorem \ref{LiZhou1} recovers Theorem \ref{FW}.

Concerning the original question, we do not want to multiply the additional factor $e^{-\psi}$ to $h$ and $h_k$. If we restrict ourselves to more special $X$, then Li and Zhou obtained the following theorem without any twist factors.
\begin{theorem}(\cite{Li(2022)} Corollary 3.1)\label{LiZ2}
Let $(X,\omega)$ be a hyperconvex manifold with a K\"{a}hler metric $\omega$. Let $L$ be a holomorphic line bundle over $X$, and $\{h_j\}$, $h$ be a family of singular Hermitian metrics on $L$, such that

(1)$i\Theta_{(L,h_j)}\geq0$ and $i\Theta_{(L,h)}\geq0$ in the sense of currents.

(2)For any local holomorphic section $v$ of $K_X\otimes L, |v|_{\omega,h_j}$ decreasingly converges to $|v|_{\omega,h}$.

Then, $\cup_{j=1}^{\infty}A^2(X,K_X\otimes L,h_j)$is dense in $A^2(X,K_X\otimes L,h)$.
\end{theorem}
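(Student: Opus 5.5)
We plan to deduce Theorem \ref{LiZ2} from Theorem \ref{LiZhou1}, using the hyperconvexity of $X$ to choose twist factors $e^{-\psi}$ that are arbitrarily close to $1$. Since $X$ is hyperconvex, there is a continuous negative psh exhaustion $\rho:X\to[-1,0)$ with $\{\rho<c\}\Subset X$ for every $c<0$. For $m\ge1$ put
\[
\psi_m:=-\log(-\rho)\cdot\tfrac1m \quad\text{or, more simply,}\quad \psi_m:=\tfrac1m\bigl(-\log(-\rho)\bigr).
\]
Each $\psi_m$ is psh, since $-\log(-\rho)$ is a convex increasing function of $\rho$. It is an exhaustion of $X$ and is bounded below by $0$. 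Moreover $\psi_m\ge0$, and $\psi_m\to0$ pointwise as $m\to\infty$.

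Fix $f\in A^2(X,K_X\otimes L,h)$ and $\varepsilon>0$. Because $\psi_m\ge0$, we have $\|f\|_{he^{-\psi_m}}\le\|f\|_h<\infty$, so $f\in A^2(X,K_X\otimes L,he^{-\psi_m})$ for every $m$. By Theorem \ref{LiZhou1} applied with $\psi=\psi_m$, there exist $k=k(m)$ and $f_m\in A^2(X,K_X\otimes L,h_ke^{-\psi_m})$ with $\|f_m-f\|_{he^{-\psi_m}}<\varepsilon$.

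The difficulty is that $f_m$ lies in a twisted space. We need $f_m$ itself, or a modification of it, to belong to $A^2(X,K_X\otimes L,h_{k})$ and to be close to $f$ in the untwisted norm $\|\cdot\|_h$. Since $e^{-\psi_m}\le1$, the twisted norms are weaker, so membership does not transfer for free. This is the main obstacle.

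To overcome it, we would instead use the family $\tilde h_k:=h_ke^{\psi_m}$ and $\tilde h:=he^{\psi_m}$ on a sublevel structure. More concretely, we would reverse the roles: apply Theorem \ref{LiZhou1} to the metrics $h_ke^{+\phi}$ and $he^{+\phi}$, where $\phi$ is chosen so that the curvature stays nonnegative. Since $-\phi$ would then be the twist $\psi$, this requires $\psi$ psh, and the hypothesis on the weights reduces to asking that $h_ke^{\psi}$ still have nonnegative curvature. That fails in general.

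So we turn to the honest route: reprove Theorem \ref{LiZhou1} directly with an untwisted $L^2$ estimate available on hyperconvex manifolds. We would follow the Fornaess--Wu/Li--Zhou scheme. Cut off $f$ by $\chi(\rho)$ supported in $\{\rho<-\delta\}$. By strong openness (the convergence $|f|^2_{h_k}\to|f|^2_h$ in $L^1_{loc}$), the cut-off $\chi f$ has small $L^2$ distance to $f$ and lies in $L^2(h_k)$ for large $k$. We then solve $\bar\partial u=\bar\partial\chi\wedge f$ with the Donnelly--Fefferman/Berndtsson type estimate. This estimate uses the bounded psh weight $\eta=-\log(-\rho)$ with $i\partial\bar\partial\eta\ge i\partial\eta\wedge\bar\partial\eta$ and gives
\[
\int_X|u|^2_{h_k}\le C\int_X|\bar\partial\chi|^2_{i\partial\bar\partial\eta}|f|^2_{h_k}
\]
with $C$ universal. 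Choosing $\chi=\chi(\eta)$, the right-hand side is controlled by $\int_{\{-2\delta<\rho<-\delta\}}|f|^2_{h_k}$. This tends, as $k\to\infty$, to $\int_{\{-2\delta<\rho<-\delta\}}|f|^2_h$, by strong openness on this relatively compact set. That quantity is small for $\delta$ small. Then $F=\chi f-u\in A^2(h_k)$ and $\|F-f\|_h\le\|F-f\|_{h_k}+o(1)$ gives density.

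The key checks are the bounded-weight estimate with a constant independent of $k$, and passing from $h_k$-smallness to $h$-smallness via $|\cdot|_{h}\le|\cdot|_{h_k}$.
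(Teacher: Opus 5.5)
Your final route is essentially the paper's proof of Theorem \ref{main3}, specialized to rank one, which is how the paper obtains this statement: cut off with $\chi(\eta)$ for $\eta=-\log(-\rho)$, solve $\bar\partial u=\bar\partial\chi\wedge f$ by the Berndtsson estimate with $\varphi=\psi=\eta$ so that the weights cancel, and use strong openness on the relatively compact sublevel set; you were also right to discard the attempted reduction to Theorem \ref{LiZhou1}. Two small fixes are needed: $\eta$ is an unbounded exhaustion, not a bounded weight (what is bounded is $|\bar\partial\eta|_{i\partial\bar\partial\eta}\le 1$), and since $f$ need not lie in any $L^2(h_k)$, the quantity $\|F-f\|_{h_k}$ may be infinite, so the last step should read $\|F-f\|_h\le\|(1-\chi)f\|_h+\|u\|_{h_k}$, exactly as in the paper.
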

Recall that a complex manifold $X$ is called hyperconvex if there exists a smooth strictly psh function $\phi<0$ on $X$, such that $\phi^{-1}(-\infty,c)$ are relatively compact in $X$ for any negative $c$. The proof of Theorem \ref{LiZhou1} and Theorem \ref{LiZ2} relies on the strong openness property of psh functions and twisted Bochner-Kodaira-Nakano formula established by Guan and Zhou (\cite{GZ(2015a)}). 

An earlier vector bundle extension of the results of Li and Zhou was obtained by the first author in his doctoral thesis~\cite{XiaoThesis}. In that work, the metrics have semipositive Nakano curvature in the sense of Lempert~\cite{Lempert(2017)}: they admit increasing pointwise approximations by smooth Nakano semipositive Hermitian metrics. We recall the two approximation results from the thesis, without reproducing their proofs.

\begin{thesisresult}[{Xiao~\cite[Theorem~3.1 and Corollary~3.2]{XiaoThesis}}]
Let $(X,\omega)$ be a Stein manifold with a K\"ahler metric $\omega$, and let $\Psi$ be a psh exhaustion function with a lower bound on $X$. Let $E$ be a holomorphic vector bundle over $X$, and let $h$ and $\{h_k\}_{k\geq1}$ be Hermitian metrics on $E$ in the sense of Lempert. Assume that
\begin{enumerate}
\item $h$ is bounded below by a continuous Hermitian metric, and $h$ and every $h_k$ have semipostive Nakano curvature in the sense of Lempert;
\item for every holomorphic section $v$ of $K_X\otimes E$, $|v|_{\omega,h_k}$ decreases to $|v|_{\omega,h}$.
\end{enumerate}
Then
\[
\bigcup_{k=1}^{\infty}A^2(X,K_X\otimes E,h_ke^{-\Psi})
\quad\text{is dense in}\quad
A^2(X,K_X\otimes E,he^{-\Psi}).
\]
If $X$ is hyperconvex, the same assumptions on $h$ and $h_k$ imply that
\[
\bigcup_{k=1}^{\infty}A^2(X,K_X\otimes E,h_k)
\quad\text{is dense in}\quad
A^2(X,K_X\otimes E,h).
\]
\end{thesisresult}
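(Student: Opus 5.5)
The plan is to follow the Li--Zhou scheme behind Theorem \ref{LiZhou1}, with the line bundle estimates replaced by vector bundle ones and strong openness replaced by its vector bundle analogue. Fix $f\in A^2(X,K_X\otimes E,he^{-\Psi})$ and $\varepsilon>0$. We look for $k$ and $F_k\in A^2(X,K_X\otimes E,h_ke^{-\Psi})$ with $\|F_k-f\|_{he^{-\Psi}}<\varepsilon$.

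\textbf{Step 1: cutting off.} Since $\Psi$ is an exhaustion, the sublevel sets $X_c=\{\Psi<c\}$ are relatively compact. We take a cut-off $\chi(\Psi)$ with $\chi\equiv1$ on $(-\infty,c]$ and $\chi\equiv 0$ on $[c+1,\infty)$, and use $f\chi(\Psi)$ as the first approximant. Its error is at most $\int_{X\setminus X_c}|f|^2_{h}e^{-\Psi}$, which is small once $c$ is large.

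\textbf{Step 2: local convergence near the cutoff.} On the compact set $\overline{X_{c+1}}$ we need
\[
\int_{X_{c+1}}|f|^2_{\omega,h_k}\to\int_{X_{c+1}}|f|^2_{\omega,h}.
\]
For line bundles this is exactly what strong openness gives. Here I would get it from Lempert's framework: $h$ is bounded below by a continuous metric, and $|f|_{h_k}$ decreases to $|f|_h$. This needs the vector bundle strong openness property, namely that $f$ being locally $L^2$ for $h$ forces $f$ to be locally $L^2$ for a slightly "larger" $h_k$. I expect this to be \textbf{the main obstacle}.

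My first attempt at it is to reduce to line bundles by projectivization, passing to $\mathcal{O}_{P(E^*)}(1)$, or alternatively to use Nakano positivity to dominate $|f|^2_{h_k}$ by a psh-weight expression. Once $|f|^2_{h_k}$ is integrable for one $k$ on $X_{c+1}$, dominated convergence gives the $L^1$ convergence above.

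\textbf{Step 3: solving $\bar\partial$ with a twisted estimate.} We solve
\[
\bar\partial u_k=\bar\partial\chi(\Psi)\wedge f=\chi'(\Psi)\,\bar\partial\Psi\wedge f
\]
with an $L^2$ bound in the metric $h_ke^{-\Psi-\eta(\Psi)}$. We choose the auxiliary weights as in Guan--Zhou's twisted Bochner--Kodaira--Nakano estimate so that $\bar\partial\Psi\wedge\partial\Psi$ is absorbed by the curvature term $i\partial\bar\partial\Psi$ of the twist. This step needs:
\begin{itemize}
\item Nakano semipositivity of $h_k$, which in Lempert's sense we approximate by smooth Nakano semipositive metrics increasing to $h_k$;
\item a limiting argument, using weak compactness of the solutions.
\end{itemize}
The estimate should give
\[
\|u_k\|^2_{h_ke^{-\Psi}}\le C\int_{X_{c+1}\setminus X_c}|f|^2_{h_k}e^{-\Psi}.
\]
By Step 2 the right-hand side converges to $C\int_{X_{c+1}\setminus X_c}|f|^2_he^{-\Psi}$, which is small for $c$ large. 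A more refined choice, as in Li--Zhou, bounds $u_k$ in terms of $|f|^2_{h_k}-|f|^2_h$ on $X_{c+1}$, which tends to $0$ by Step 2.

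\textbf{Step 4: conclusion.} Put $F_k=f\chi(\Psi)-u_k$. It is holomorphic and lies in $A^2(X,K_X\otimes E,h_ke^{-\Psi})$. Since $|\cdot|_h\le|\cdot|_{h_k}$, we get
\[
\|F_k-f\|_{he^{-\Psi}}\le\|f(1-\chi(\Psi))\|_{he^{-\Psi}}+\|u_k\|_{h_ke^{-\Psi}},
\]
which is small for $c$ and then $k$ large. This proves density.

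\textbf{Hyperconvex case.} Here $\Psi$ is replaced by a bounded exhaustion $-\log(-\varphi)$-type weight. Because the weight is bounded on the support of the relevant terms, the twist factors can be removed by uniform equivalence of the norms, and the same argument yields the second density statement.
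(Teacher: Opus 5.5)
There are genuine gaps. The decisive one is Step~2. You correctly isolate what is needed: on each relatively compact $X_{c+1}$, finiteness of $\int|f|^2_{\omega,h}$ must force finiteness of $\int|f|^2_{\omega,h_k}$ for $k\gg1$, after which dominated convergence finishes. But you neither prove nor cite such a statement, and the projectivization you propose does not give it.

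For the section $\tilde f$ of $\mathcal O_{P(E^*)}(1)$ induced by $f$, Cauchy--Schwarz gives $|\langle f,\xi\rangle|^2/|\xi|^2_{h^*}\le|f|^2_h$. So the fiber integral of $|\tilde f|^2$ against a fixed Fubini--Study volume is $\lesssim|f|^2_h$, but there is no reverse bound. In rank $2$ with $h=\mathrm{diag}(\lambda_1,\lambda_2)$, $f=e_1$ and $\lambda_2\ll\lambda_1$, that fiber integral is of order $\lambda_2\log(\lambda_1/\lambda_2)\ll\lambda_1=|f|^2_h$. Hence line-bundle strong openness on $P(E^*)$ controls only a strictly weaker quantity on $X$. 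Switching to the $h$-dependent Fubini--Study volume restores the comparison, but turns the weight into a difference of psh functions.

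The missing input is a genuine vector-bundle strong openness theorem. One option is Lempert's theorem for metrics that are Nakano semipositive in his sense, which is the route of the thesis. The other, as in this paper, is the Liu--Xiao--Yang--Zhou comparison placing Lempert--Nakano semipositive metrics under Definition~\ref{sing Nak}, followed by Theorem~\ref{strong open v.b.}, which applies since $h_k\ge h$ and $\det h_k\to\det h$ a.e. The paper does not reprove the displayed statement; it obtains it, even without the continuous lower bound, from Theorems~\ref{thm:main} and~\ref{main3} through that comparison.

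Solving $\bar\partial$ with Lempert's smooth approximants $g_\nu\nearrow h_k$ and passing to weak limits is fine, but both estimates you claim fail as stated.

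In Step~3, ``small for $c$ large'' needs $C$ independent of $c$, and with the transition layer $\{c<\Psi<c+1\}$ no such $C$ exists. Take $X=\mathbb C$, $\Psi=|z|^2$, $E$ trivial, $f=z^N\,dz$, $c=N$. Rotational symmetry shows the minimal solution is $(\chi(|z|^2)-\alpha)f$ with $\alpha\approx\frac12$, of squared norm $\approx\frac14\|f\|^2$, whereas the annulus integral is $\approx\|f\|^2/\sqrt{2\pi N}$. The paper instead cuts off at logarithmic scale, $\chi(\log(1+\Psi)/c)$, with weight $\phi_c=\Psi-\frac1c\log(1+\Psi)$. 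Then $i\partial\bar\partial\phi_c\ge\frac{1}{c(1+\Psi)^2}i\partial\Psi\wedge\bar\partial\Psi$, $e^{-\Psi}\le e^{-\phi_c}$ everywhere, and $e^{-\phi_c}\le e^2e^{-\Psi}$ on the support. So the error is $\le\frac{4e^2}{c}\int_{\{\Psi<e^{2c}-1\}}|f|^2_{h_k}e^{-\Psi}$ (Lemma~\ref{lem:cutoff}, applied to smooth $\Psi_k\searrow\Psi$). Alternatively, a concave twist with your linear cutoff gives $C(c)=O(c)$, and you would then choose $c$ along a sequence with $c\int_{\{c<\Psi<c+1\}}|f|^2_he^{-\Psi}\to0$.

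In the hyperconvex case, $-\log(-\phi)$ (with $\phi<0$ the hyperconvex exhaustion) is unbounded, not bounded. Then $e^{-\Psi}=-\phi\to0$ at infinity. Since the correction $u$ is not compactly supported, no norm equivalence removes this factor. The bounded weight $\phi$ itself has too little curvature to absorb $|\bar\partial\phi|^2/\phi^2$ near infinity. The missing idea is Berndtsson's twisted estimate, extended in Proposition~\ref{Berndtsson} to Nakano semipositive singular metrics. It is applied with $\varphi=\psi=-\log(-\phi)$, $r=\frac12$, $\Theta=i\partial\psi\wedge\bar\partial\psi$. Then $e^{\psi-\varphi}\equiv1$ and $\langle[\Theta,\Lambda_\omega]^{-1}v_j,v_j\rangle_{\omega,h_k}\le4|F|^2_{\omega,h_k}$, giving unweighted $L^2$ bounds that strong openness makes small.
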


On a Stein manifold, global holomorphic sections of $K_X\otimes E$ generate every fiber by Cartan's theorem A; hence the global-section convergence condition in the thesis is equivalent to the local-section condition used below. The proofs in the thesis use Lempert's strong openness theorem and the twisted $L^2$ estimates underlying the argument of Li and Zhou. The relation between Lempert's curvature condition and the optimal $L^2$ definition of singular Nakano semipositivity is recalled in Section~2. In particular, the results of Liu, Xiao, Yang and Zhou~\cite[Section~2.3]{Liu(2026)} imply that every metric with nonnegative Nakano curvature in the sense of Lempert is Nakano semipositive in the sense of Definition~\ref{sing Nak}. The main theorems below extend the approximation results of the thesis to this latter class of singular Hermitian metrics, without assuming an increasing smooth Nakano semipositive approximation.

Recently, strong openness properties has been established for any singular Nakano semipositive Hermitian vector bundles.(\cite{Liu(2026)}) Hence, it is reasonable to expect that the above density results also hold true if we replace $L$ with a higher rank vector bundle. The following are our main theorems:
\begin{theorem}\label{thm:main}
Let $(X,\omega)$ be a Stein manifold with a K\"ahler metric $\omega$, and let $\Psi$ be a psh exhaustion function with a lower bound on $X$. Let $E$ be a holomorphic vector bundle over $X$, and $\{h_k\}$, $h$ be a family of singular Hermitian metrics on $E$ such that

(1) $h_k$ and $h$ are Nakano semi-positive in the sense of Definition 1 in Section 2.

(2) For any local holomorphic section $v$ of $K_X\otimes E$, $|v|_{\omega,h_k}$ decreasingly converges to $|v|_{\omega,h}$.

Then, $\cup_{k=1}^{\infty}A^2(X,K_X\otimes E,h_ke^{-\Psi})$is dense in $A^2(X,K_X\otimes E,he^{-\Psi})$.
\end{theorem}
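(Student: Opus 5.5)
The plan follows the Li--Zhou scheme for line bundles, substituting the strong openness property for singular Nakano semipositive vector bundles from \cite{Liu(2026)} for the psh-function version. We argue by orthogonality. Suppose $f\in A^2(X,K_X\otimes E,he^{-\Psi})$ is orthogonal, in the inner product of $he^{-\Psi}$, to every element of $\bigcup_k A^2(X,K_X\otimes E,h_ke^{-\Psi})$. Note first that this union is a subset of $A^2(X,K_X\otimes E,he^{-\Psi})$, since $|v|_{\omega,h}\le|v|_{\omega,h_k}$. It suffices to show $f=0$, and for that it suffices to produce, for every $\varepsilon>0$, an element $F$ of the union with $\|f-F\|_{he^{-\Psi}}<\varepsilon$. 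Then $\|f\|^2=\langle f,f-F\rangle\le\|f\|\varepsilon$.

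\textbf{Step 1 (cut-off and local openness).} Set $X_c=\{\Psi<c\}\Subset X$. By strong openness for $(E,h)$ \cite{Liu(2026)}, together with monotone/dominated convergence on the compact set $\overline{X_c}$, we get $\int_{X_c}|f|^2_{\omega,h_k}e^{-\Psi}\to\int_{X_c}|f|^2_{\omega,h}e^{-\Psi}$, and in particular this is finite for $k$ large. Concretely, on a finite cover of $\overline{X_c}$ by trivializing Stein opens, we use that the multiplier submodule sheaves satisfy $\bigcup_k\mathcal E(h_k)=\mathcal E(h)$ in the decreasing-norm sense, and then use Dini/monotone convergence to obtain the $L^1$ convergence on $X_c$.

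\textbf{Step 2 ($\bar\partial$-correction).} Choose a cut-off $\chi(\Psi)$ with $\chi=1$ on $(-\infty,c-1)$ and $\chi=0$ on $(c,\infty)$. The goal is to solve
\[
\bar\partial u=\bar\partial(\chi(\Psi))\wedge f=\chi'(\Psi)\,\bar\partial\Psi\wedge f
\]
with $u$ having finite $h_ke^{-\Psi}$-norm and $\|u\|_{he^{-\Psi}}$ small, and then take $F=\chi(\Psi)f-u$. The obstacle is that $\bar\partial\Psi$ is not controlled by $i\partial\bar\partial\Psi$ in general. Following Guan--Zhou and Li--Zhou, we therefore use the twisted $L^2$ estimate: replace the weight $\Psi$ by $\Psi+\eta(\Psi)$ with auxiliary functions chosen so that $|\bar\partial\Psi|^2$ is dominated by the twisted curvature term $(\eta'' - \ldots)i\partial\Psi\wedge\bar\partial\Psi$. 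This is the standard ODE choice that gives an estimate of the form $\|u\|^2_{h_ke^{-\Psi}}\le C\int_{\{c-1<\Psi<c\}}|f|^2_{\omega,h_k}e^{-\Psi}$, with $C$ independent of $c$ and $k$. A convenient realization is to let the cut-off $\chi$ vary over an interval $[c-N,c]$ of length $N$, so that $\int|\chi'|^2\lesssim 1/N$.

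For singular Nakano semipositive $h_k$ in the sense of Definition \ref{sing Nak}, the twisted estimate is not available via a Bochner formula. Instead, by the very definition (an optimal $L^2$ estimate condition), we expect to derive it by a duality/Hörmander-type argument. This is done on Stein subdomains with an extra smooth psh weight, followed by an exhaustion and weak-limit argument. This is the step I expect to be the main obstacle: one has to verify that the optimal-$L^2$ definition yields the twisted version with the weight $e^{-\Psi-\eta(\Psi)}$, uniformly in $k$. My first attempt would be to absorb the twist into a modified metric $\tilde h_k=h_ke^{-\Psi-\eta(\Psi)}$, which is still singular Nakano semipositive after adding a psh weight, and then apply the defining estimate with the auxiliary smooth positive form $i\partial\bar\partial(\ldots)$ playing the role of curvature.

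\textbf{Step 3 (conclusion).} Since $u$ is supported-controlled by $f$ on the annulus and $h\le h_k$, we get
\[
\|f-F\|_{he^{-\Psi}}\le\|(1-\chi)f\|_{he^{-\Psi}}+\|u\|_{h_ke^{-\Psi}}.
\]
The first term is small for $c$ large because $f\in A^2$. For the second term, first choose $c$ (and $N$) so that $\int_{\{\Psi>c-N\}}|f|^2_he^{-\Psi}<\varepsilon$. Then choose $k$ by Step 1 so that the $h_k$-integral over $X_c$ is within $\varepsilon$ of the $h$-integral. Finally, $F$ is holomorphic with finite $h_ke^{-\Psi}$-norm, because $\chi f$ is compactly supported in $X_c$ where $f$ is $h_k$-integrable. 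This completes the argument.
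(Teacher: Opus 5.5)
Your overall architecture is the paper's: cut off $f$ on a sublevel set of $\Psi$, use the strong openness theorem of Liu--Xiao--Yang--Zhou on that relatively compact set to choose $k$, correct by a $\bar\partial$-equation, and compare via $h\le h_k$. Steps 1 and 3 are fine, and the orthogonality wrapper is superfluous. The gap is Step 2, which is the entire analytic content and which you leave open.

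Definition~\ref{sing Nak} provides only an \emph{untwisted} estimate. Its curvature operator $B=[i\partial\bar\partial\psi\otimes \mathrm{Id}_E,\Lambda_{\omega}]$ comes solely from an auxiliary smooth strictly psh weight $\psi$; the singular metric contributes nothing to $B$. Hence absorbing $\Psi+\eta(\Psi)$ into $\tilde h_k=h_ke^{-\Psi-\eta(\Psi)}$ discards exactly the term $\eta''(\Psi)\,i\partial\Psi\wedge\bar\partial\Psi$ that was supposed to control $\chi'(\Psi)\bar\partial\Psi\wedge f$, so the argument becomes circular.

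Suppose instead you use $\phi=\Psi+\eta(\Psi)$ as the smooth weight. Then the bound you claim (a constant independent of $c$ for a width-one cut-off in the variable $\Psi$, or $\lesssim 1/N$ for width $N$) is not what this method yields:
\begin{enumerate}
\item For general psh $\Psi$ you need $\eta$ convex with $\eta'\ge-1$. To pass from $e^{-\phi}$ back to $e^{-\Psi}$ on the left you also need $\eta$ bounded above, so $\eta$ is nonincreasing.
\item If $\eta'$ increases by $\Delta\le 1$ across $[c-1,c]$, then $\eta'\le-\Delta$ on $[\inf\Psi,c-1]$. Hence $e^{\sup\eta-\eta}\ge e^{(c-1-\inf\Psi)\Delta}$ on the annulus.
\item Cauchy--Schwarz applied to $1=\int|\chi'|$ forces the constant $\sup\frac{|\chi'|^2}{\eta''}e^{\sup\eta-\eta}$ to be at least $\Delta^{-1}e^{(c-1-\inf\Psi)\Delta}\ge e\,(c-1-\inf\Psi)$.
\end{enumerate}
Your heuristic $\int|\chi'|^2\lesssim 1/N$ ignores this exponential loss. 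Only a transition region whose length grows with $c$ avoids it.

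The missing idea is the paper's Lemma~\ref{lem:cutoff}. Take the weight $\phi_c=\Psi-\frac1c\log(1+\Psi)$ together with the logarithmic cut-off $\chi(\log(1+\Psi)/c)$, whose transition region $\{e^c-1\le\Psi\le e^{2c}-1\}$ is long on the scale of $\Psi$. This gives three facts:
\begin{enumerate}
\item $i\partial\bar\partial\phi_c\ge\frac{1}{c(1+\Psi)^2}i\partial\Psi\wedge\bar\partial\Psi$;
\item $\bar\partial\chi=\chi'\frac{1}{c(1+\Psi)}\bar\partial\Psi$;
\item $e^{-\Psi}\le e^{-\phi_c}\le e^2e^{-\Psi}$ on the support.
\end{enumerate}
Combining them with the rank-one identity $\langle[i\partial\Psi\wedge\bar\partial\Psi,\Lambda_\omega]^{-1}(f\wedge\bar\partial\Psi),f\wedge\bar\partial\Psi\rangle_{\omega,h}=|f|^2_{\omega,h}$ and the plain estimate of Definition~\ref{sing Nak} gives $\|u\|^2\le\frac{4e^2}{c}\int_{\{\Psi<e^{2c}-1\}}|f|^2_{\omega,h}e^{-\Psi}dV_\omega$, with no twisting.

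You also skip what is needed to invoke Definition~\ref{sing Nak} at all. $\Psi$ is merely psh, so the paper:
\begin{enumerate}
\item replaces $\Psi$ by smooth strictly psh $\Psi_k\searrow\Psi$;
\item solves on a Zariski-open Stein subset where $E$ is trivial, and extends across the analytic set by Demailly's lemma (the coefficients are $L^2_{loc}$ by Griffiths semipositivity);
\item with $h_{j_c}$ fixed, lets $k\to\infty$ using local $L^2$ bounds, normal families and Fatou's lemma.
\end{enumerate}
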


\begin{theorem}\label{main3}
Let $(X,\omega)$ be a hyperconvex manifold with a K\"{a}hler metric $\omega$. Let $E$ be a holomorphic vector bundle over $X$, $\{h_j\}$, $h$ be a family of singular Hermitian metrics on $E$, such that

(1)$h_j$ and $h$ are Nakano semi-positive in the sense of Definition 1 in Section 2..

(2)For any local holomorphic section $v$ of $K_X\otimes E, |v|_{\omega,h_j}$decreasingly converges to $|v|_{\omega,h}$.

Then, $\cup_{j=1}^{\infty}A^2(X,K_X\otimes E,h_j)$is dense in $A^2(X,K_X\otimes E,h)$.    
\end{theorem}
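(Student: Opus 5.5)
We plan to follow the scheme of Li and Zhou, with Hilbert-space duality as the frame. Fix $f\in A^2(X,K_X\otimes E,h)$ on the hyperconvex manifold $X$ with negative exhaustion $\varrho$, smooth and strictly psh. It suffices to show that if $f$ is orthogonal in $A^2(X,K_X\otimes E,h)$ to every $g\in\cup_jA^2(X,K_X\otimes E,h_j)$, then $f=0$. The inclusion $\cup_j A^2(h_j)\subset A^2(h)$ holds because $|v|_{h_j}\ge|v|_h$.

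Equivalently, we construct approximants directly in two steps. First, cut off: let $X_c=\{\varrho<c\}$ with $c<0$ close to $0$. Since $\int_X|f|^2_h<\infty$, the tail $\int_{X\setminus X_c}|f|_h^2$ is small. Second, apply the strong openness property for singular Nakano semipositive metrics from \cite{Liu(2026)}. It gives that $|f|^2_{h_j}\to|f|^2_h$ in $L^1$ on the relatively compact set $\overline{X_{c'}}$, for $c<c'<0$. The local statement plus compactness gives $L^1_{loc}$; monotone convergence together with local integrability of $|f|^2_{h_1}$ on compacts then gives convergence on $X_{c'}$.

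Next we correct the cut-off with a $\bar\partial$-equation. Let $\chi$ be a smooth cut-off function of $\varrho$, equal to $1$ on $X_c$ and supported in $X_{c'}$, and solve
\[
\bar\partial u_j=\bar\partial\chi(\varrho)\wedge f=\chi'(\varrho)\,\bar\partial\varrho\wedge f
\]
with respect to the metric $h_j$, twisted by a weight built from $\varrho$. Here we use the twisted $L^2$ estimate of Guan--Zhou type, valid for singular Nakano semipositive metrics in the sense of Definition~\ref{sing Nak}; the optimal $L^2$ definition is made exactly so that such Hörmander/twisted estimates hold. We choose the twist so that the curvature term $i\partial\bar\partial(-\log(-\varrho))$ controls $|\bar\partial\varrho|^2$, as in Li--Zhou Corollary 3.1. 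This yields
\[
\int_X|u_j|^2_{h_j}\le C\int_{X_{c'}\setminus X_c}|f|^2_{h_j}\,\frac{|\chi'|^2|\partial\varrho|^2}{(\text{curvature})}\le C'\int_{X_{c'}\setminus X_c}|f|^2_{h_j},
\]
with $C'$ independent of $c$ after suitable normalization, e.g.\ by the choice $\chi(-\log(-\varrho))$ with logarithmic scaling. By the strong openness step, the right side tends as $j\to\infty$ to $C'\int_{X_{c'}\setminus X_c}|f|_h^2$, which is small.

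Then $F_j=\chi f-u_j$ is holomorphic, lies in $A^2(h_j)$ because $\chi f$ is supported in $X_{c'}$ where $|f|_{h_j}^2$ is integrable, and satisfies
\[
\|F_j-f\|_h\le\|(1-\chi)f\|_h+\|u_j\|_{h_j},
\]
using $h\le h_j$. Both terms are small. The main obstacle is to get a $\bar\partial$ estimate whose constant does not blow up as $c\to0$, while the metrics are only singular. We plan to handle this by applying the twisted estimate on the sublevel sets $X_{c''}$, which are Stein, and taking weak limits. For the singular metric we use the approximation built into Definition~\ref{sing Nak}: its $L^2$ solvability condition is stated precisely for such twisted settings. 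Theorem~\ref{thm:main} is handled identically, with $e^{-\Psi}$ absorbed into the metrics and sublevel sets of $\Psi$ replacing those of $\varrho$.
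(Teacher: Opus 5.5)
Your architecture matches the paper's. You cut off with $\chi$ applied to $\rho:=-\log(-\varrho)$, so that $\bar\partial\chi(\rho)\wedge f$ is measured against $i\partial\rho\wedge\bar\partial\rho$ with a constant independent of the level. You then use Theorem~\ref{strong open v.b.} on the relatively compact sublevel sets of $\rho$, and take $F=\chi(\rho)f-u$ using $h\le h_j$.

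The gap is the $\bar\partial$-estimate, which is the heart of the proof. You need the \emph{unweighted} bound $\int_X|u|^2_{\omega,h_j}dV_\omega\le C\int_X\langle[i\partial\rho\wedge\bar\partial\rho,\Lambda_\omega]^{-1}v,v\rangle_{\omega,h_j}dV_\omega$. You obtain it by invoking a Guan--Zhou twisted estimate for singular metrics, on the grounds that Definition~\ref{sing Nak} is ``stated precisely for such twisted settings'' and has an ``approximation built into'' it. Neither is true. Definition~\ref{sing Nak} only grants the plain H\"ormander estimate with a smooth strictly psh weight, on Stein open sets over which $E$ is trivial. It involves no smooth approximation of $h$; that is Lempert's setting. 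The paper says explicitly that no twisted Bochner--Kodaira--Nakano formula is available here. The plain estimate with a weight such as $t\rho$ does not rescue you either: $e^{-t\rho}=(-\varrho)^t\to0$ at the boundary, so you only control $\int_X|u|^2_{\omega,h_j}(-\varrho)^t dV_\omega$, not the $A^2(X,K_X\otimes E,h_j)$-norm.

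The missing idea is the paper's Proposition~\ref{Berndtsson}, which derives Berndtsson's twisted estimate from Definition~\ref{sing Nak} alone by the minimal-solution trick:
\begin{enumerate}
\item Let $u$ be the solution of $\bar\partial u=v$ that is minimal in $L^2(he^{-\varphi})$. Then $ue^{\psi}$ is orthogonal to $\ker\bar\partial$ in $L^2(he^{-\varphi-\psi})$, so it is the minimal solution of $\bar\partial\tilde u=e^{\psi}(v+\bar\partial\psi\wedge u)$.
\item Apply Definition~\ref{sing Nak} with weight $\varphi+\psi$. The pointwise identity $\langle[i\partial\psi\wedge\bar\partial\psi,\Lambda_\omega]^{-1}(\bar\partial\psi\wedge u),\bar\partial\psi\wedge u\rangle_{\omega,h}=|u|^2_{\omega,h}$, together with $r\,i\partial\bar\partial(\varphi+\psi)\ge i\partial\psi\wedge\bar\partial\psi$, lets you absorb the $u$-term, with constant $(1-\sqrt r)^{-2}$.
\item Choose $\varphi=\psi=\rho$, $r=\frac12$, $\Theta=i\partial\rho\wedge\bar\partial\rho$. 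Then $e^{\psi-\varphi}\equiv1$, which is exactly the unweighted estimate you need.
\end{enumerate}
Alternatively, a bounded weight $g(\rho)$ with $g'>0$, $g'+g''\ge0$, and $g'+g''\ge L^{-2}$ on the transition region $\{L\le\rho\le2L\}$ of $\chi(\rho/L)$ would make the plain estimate work. Such $g$ exist with oscillation bounded independently of $L$, but some argument of this kind must be supplied.

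Two smaller points:
\begin{enumerate}
\item Definition~\ref{sing Nak} needs $E|_U$ trivial, so Stein sublevel sets are not enough. The paper passes to a Zariski-open Stein set where $E$ is trivial, then extends $\bar\partial u=v$ across the complement via Griffiths semipositivity and Lemma~\ref{ext of dbar}.
\item Integrability of $|f|^2_{\omega,h_1}$ on compacts is not known a priori. It is Theorem~\ref{strong open v.b.} that makes $|f|^2_{\omega,h_j}$ integrable on relatively compact sublevel sets for $j$ large.
\end{enumerate}
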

Here $A^2(X,K_X\otimes E,h)$ is defined exactly in the same way as in the line bundle case. And for the notions of singular Hermitian metrics and its positivity concepts on holomorphic vector bundles, we shall recall in the second section of this paper. When rank$E=1$, Theorem \ref{thm:main} implies Theorem \ref{LiZhou1}, and Theorem \ref{main3} implies Theorem \ref{LiZ2} (See Section 2).

In order to prove Theorem \ref{thm:main} and Theorem \ref{main3}, since we do not have twisted Bochner-Kodaira-Nakano formula directly in the singular Hermitian vector bundle case, we take a different approach as in \cite{Li(2022)}. For Theorem \ref{thm:main}, by using the definition of Nakano semi-positivity of singular Hermitian metrics, we solve a $\bar\partial$-equation with a specifically chosen weight. For Theorem \ref{main3}, We generalize Berndtsson's twisted $L^2$ estimates for trivial line bundle cases(\cite{Berndtsson(2001)}) which is also used in \cite{FW(2018)} to higher rank vector bundle cases. And both theorems use strong openness properties for Hermitian vector bundles to complete the proof.

\section{Preliminaries}
In this section, we first recall two lemmas in Hodge theory and basic $L^2$ theory of $\bar\partial$-equations.

\begin{lemma}(\cite{GZ(2015a)} Lemma 4.2)\label{l:positve}
Let $X$ be  a K\"{a}hler manifold of dimension $n$ with
a K\"{a}hler metric $\omega$. Let $(E,h)$ be a hermitian holomorphic vector bundle over $X$. $\theta$ be a continuous $(1,0)$ form on $X$. Then we have
$$[\sqrt{-1}\theta\wedge\bar\theta,\Lambda_{\omega}]\alpha=\bar\theta\wedge(\alpha\llcorner(\bar\theta)^\sharp\big),$$
for any $(n,1)$ form $\alpha$ with value in $E$. Moveover, for any positive $(1,1)$ form $\beta$, we have $[\beta,\Lambda_{\omega}]$ is semipositive.    
\end{lemma}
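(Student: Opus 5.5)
The identity is pointwise and linear in $\alpha$, and the operators act only on the form part. So I will fix a point $x\in X$ and a local holomorphic frame $\{e_\lambda\}$ of $E$, and reduce to scalar-valued forms. Writing $\alpha=\sum_\lambda \alpha^\lambda\otimes e_\lambda$, both sides of the identity act on each $\alpha^\lambda$ separately. The Hermitian metric $h$ enters only when we later pair with $\alpha$, and there it contributes $\sum_{\lambda,\mu}h_{\lambda\bar\mu}\langle\cdot,\cdot\rangle_\omega$. Since $(h_{\lambda\bar\mu})$ is positive definite, this preserves semipositivity of any form-part operator. At $x$ I choose coordinates with $\omega=i\sum_j dz_j\wedge d\bar z_j$ and write $dz:=dz_1\wedge\cdots\wedge dz_n$. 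I also write $\theta=\sum_k\theta_k\,dz_k$ and $\alpha=\sum_j\alpha_j\,dz\wedge d\bar z_j$.

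\textbf{First identity.} The key observation is a bidegree count. The form $i\theta\wedge\bar\theta\wedge\alpha$ has bidegree $(n+1,2)$, so it vanishes. Hence
\[
[i\theta\wedge\bar\theta,\Lambda_\omega]\alpha=i\theta\wedge\bar\theta\wedge\Lambda_\omega\alpha .
\]
I will then compute $\Lambda_\omega(dz\wedge d\bar z_j)$ in the orthonormal coordinates. With $\Lambda_\omega=\sum_k \iota_{\partial/\partial\bar z_k}\iota_{\partial/\partial z_k}$ up to the standard sign and factor of $i$, this is a signed multiple of $\widehat{dz_j}$, the $(n-1,0)$ form with $dz_j$ omitted. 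Next, $\theta\wedge\widehat{dz_j}=\pm\theta_j\,dz$, so the left side becomes a constant times $\sum_{j,k}\theta_j\bar\theta_k\alpha_j\,dz\wedge d\bar z_k$.

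On the other side, $(\bar\theta)^\sharp$ is the metric dual vector of type $(0,1)$, which is $\sum_j\theta_j\,\partial/\partial\bar z_j$ in these coordinates. Contracting gives $\alpha\llcorner(\bar\theta)^\sharp=\pm\sum_j\theta_j\alpha_j\,dz$, and wedging with $\bar\theta$ returns $\sum_{j,k}\theta_j\bar\theta_k\alpha_j\,d\bar z_k\wedge dz$, up to sign.

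The only real work is to check that the signs and the factors of $i$ agree once the convention for $\Lambda_\omega$ and the ordering $dz\wedge d\bar z_k$ versus $d\bar z_k\wedge dz$ are fixed consistently. I expect this sign bookkeeping to be the main obstacle. To control it, I will first check the case $n=1$, $\theta=dz_1$ by hand, then use multilinearity and the $U(n)$-invariance of both sides to reduce to $\theta=c\,dz_1$.

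\textbf{Semipositivity.} At $x$ I diagonalize the positive $(1,1)$ form $\beta$ simultaneously with $\omega$. This gives a unitary coframe $\zeta_1,\dots,\zeta_n$ with $\omega=i\sum\zeta_j\wedge\bar\zeta_j$ and $\beta=i\sum\lambda_j\zeta_j\wedge\bar\zeta_j$, where $\lambda_j\ge 0$. By linearity,
\[
[\beta,\Lambda_\omega]=\sum_j\lambda_j[i\zeta_j\wedge\bar\zeta_j,\Lambda_\omega].
\]
Applying the first identity with $\theta=\zeta_j$ gives
\[
\langle[i\zeta_j\wedge\bar\zeta_j,\Lambda_\omega]\alpha,\alpha\rangle=\langle\bar\zeta_j\wedge(\alpha\llcorner\bar\zeta_j^\sharp),\alpha\rangle=|\alpha\llcorner\bar\zeta_j^\sharp|^2\ge 0 .
\]
Here I use that wedging with $\bar\zeta_j$ is adjoint to contraction with $\bar\zeta_j^\sharp$. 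Summing over $j$ with $\lambda_j\ge0$, and incorporating $h$ as described in the first paragraph, yields $\langle[\beta,\Lambda_\omega]\alpha,\alpha\rangle_h\ge0$ for every $E$-valued $(n,1)$ form $\alpha$.
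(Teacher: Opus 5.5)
The paper gives no proof of this lemma; it simply quotes Guan--Zhou. Your argument is the standard pointwise verification, essentially the one in that source, and it is correct: the bidegree reduction to $i\theta\wedge\bar\theta\wedge\Lambda_\omega\alpha$, a computation in unitary coordinates, then diagonalizing $\beta$ and using that contraction with $(\bar\theta)^\sharp$ is the adjoint of $\bar\theta\wedge\cdot$.

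One small fix to your sign bookkeeping. A check at $n=1$ cannot detect an $n$-dependent factor such as $(-1)^n$, so after reducing to $\theta=dz_1$ you should check that case in dimension $n$ directly. The check is immediate:
\begin{itemize}
\item Since $dz_1\wedge\Lambda_\omega(dz\wedge d\bar z_j)=0$ for $j\neq 1$, only the $j=1$ term survives.
\item Both sides then equal $\alpha_1\,dz\wedge d\bar z_1$, the orthogonal projection onto the $d\bar z_1$-component (as in Demailly, Chapter VI, Proposition 5.8).
\item This requires reading $\alpha\llcorner(\bar\theta)^\sharp$ as the interior product $\iota_{(\bar\theta)^\sharp}\alpha$, the same adjoint convention you already use in the semipositivity step.
\end{itemize}
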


\begin{lemma}(\cite{Demailly(1982)} Lemma 6.9)\label{ext of dbar}
 Let $\Omega$ be an open subset of $\mathbb{C}^n$ and
$Z$ be a complex analytic subset of $\Omega$. Assume that $u$ is a $(p,q-1)$-form with $L^2_{loc}$ coefficients and $v$ is a $(p,q)$-form with $L^1_{loc}$ coefficients such that $\bar\partial u = v$ on $\Omega-Z$ (in the sense of distribution theory). Then $\bar\partial u = v$ on $\Omega$    
\end{lemma}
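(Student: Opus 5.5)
The plan is to test the equation against compactly supported smooth forms and to remove $Z$ with a cutoff function whose $\bar\partial$ has small $L^2$ norm. The statement is local, so I fix a point of $\Omega$ and a small neighbourhood $U$ of it. If the point is off $Z$ there is nothing to prove. Otherwise I shrink $U$ so that $Z\cap U\subset\{g=0\}$ for some holomorphic function $g\not\equiv0$ on $U$ with $|g|<e^{-1}$. Then it is enough to prove
$$(-1)^{p+q}\int_U u\wedge\bar\partial\varphi=\int_U v\wedge\varphi$$
for every smooth compactly supported $(n-p,n-q)$-form $\varphi$ on $U$. (The sign is whatever the convention for distributional $\bar\partial$ requires.) The hypothesis gives this identity whenever $\operatorname{supp}\varphi\cap\{g=0\}=\emptyset$, because $\{g=0\}\supset Z$ and $\bar\partial u=v$ on $U\setminus Z$.

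\emph{Cutoff.} Fix a smooth $\theta:\mathbb{R}\to[0,1]$ with $\theta=0$ on $(-\infty,1]$ and $\theta=1$ on $[2,\infty)$. For $\epsilon>0$ set
$$\chi_\epsilon(z)=\theta\big(\epsilon\log\log(1/|g(z)|)\big)\ \text{ if } g(z)\neq0,\qquad \chi_\epsilon(z)=1 \ \text{ if } g(z)=0 .$$
Then $1-\chi_\epsilon$ vanishes near $\{g=0\}$ and is smooth, and $1-\chi_\epsilon\to1$ pointwise off $\{g=0\}$ as $\epsilon\to0$. This set has Lebesgue measure zero. Writing $\log\log(1/|g|)=\log\bigl(-\tfrac12\log|g|^2\bigr)$, a direct computation gives
$$|\bar\partial\chi_\epsilon|\le C\epsilon\,\frac{|dg|}{|g|\,\log(1/|g|)}\,\mathbf 1_{A_\epsilon},$$
where $A_\epsilon=\{e^{e^{1/\epsilon}}\le 1/|g|\le e^{e^{2/\epsilon}}\}$ shrinks to $\{g=0\}$.

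\emph{The key estimate.} The main point is that $|dg|^2/(|g|^2\log^2|g|)$ is locally integrable. Near a smooth point of $\{g=0\}$ where $dg\neq0$, take $g$ as a coordinate $w$. The integral then reduces to $\int_0^{r_0} \frac{r\,dr}{r^2\log^2 r}<\infty$. In general I would use the coarea formula with respect to $|g|$, or a resolution or a Weierstrass-polynomial argument, to reduce to the same one-variable bound. This step is the main obstacle: one has to justify integrability near singular points of $\{g=0\}$ and where $dg$ vanishes. My first attempt would be the coarea route. With $t=|g|$ one writes $\int |dg|^2 F(|g|)\,d\lambda = \int_0^{r_0}F(t)\,t\,\big(\int_{\{|g|=t\}}|dg|\,d\sigma\big)\,dt$, and the inner integral is $O(t)$ by Stokes applied to $\partial\bar\partial$-type forms on $\{|g|<t\}$. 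Granting this estimate, $\|\bar\partial\chi_\epsilon\|_{L^2(\operatorname{supp}\varphi)}\le C'\epsilon\to0$.

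\emph{Passing to the limit.} Apply the identity to $(1-\chi_\epsilon)\varphi$. This gives
$$(-1)^{p+q}\int u\wedge(1-\chi_\epsilon)\bar\partial\varphi-(-1)^{p+q}\int u\wedge\bar\partial\chi_\epsilon\wedge\varphi=\int(1-\chi_\epsilon)v\wedge\varphi .$$
As $\epsilon\to0$, dominated convergence handles the first and third terms, since $u\in L^2_{loc}\subset L^1_{loc}$ and $v\in L^1_{loc}$. For the middle term, Cauchy--Schwarz bounds it by $\|u\|_{L^2(\operatorname{supp}\varphi)}\,\|\bar\partial\chi_\epsilon\|_{L^2}\,\sup|\varphi|\le C\epsilon\to0$. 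This is exactly where $u\in L^2_{loc}$, rather than merely $L^1_{loc}$, is needed. In the limit we get the desired identity, so $\bar\partial u=v$ on $\Omega$.
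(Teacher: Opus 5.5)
Your argument is correct, apart from the integrability estimate you explicitly grant, which is true. It takes a different route from the standard proof.

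\textbf{The standard route.} The paper does not prove this lemma; it cites Demailly. His proof inducts on $\dim Z$, using that the singular locus is an analytic set of smaller dimension. So one only works near regular points of $Z$, where after a change of coordinates $Z\subset\{z_1=0\}$, and one uses the cutoff $\lambda(|z_1|/\varepsilon)$. There $\|\bar\partial\lambda_\varepsilon\|_{L^2}$ stays merely bounded: $|\bar\partial\lambda_\varepsilon|\lesssim 1/\varepsilon$ on a set of volume $O(\varepsilon^2)$. The middle term tends to $0$ because $\int_{\{|z_1|\le\varepsilon\}\cap\operatorname{supp}\varphi}|u|^2\to0$.

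\textbf{Your route.} You use one hypersurface $\{g=0\}\supset Z$, singular points included, with no induction or flattening. Your log-log cutoff has $\bar\partial$ tending to $0$ in $L^2$. In both proofs, $u\in L^2_{loc}$ enters at the same Cauchy--Schwarz step.

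\textbf{Closing the estimate you flag.} There is a shorter proof than coarea. Put $\rho=-\log(-\log|g|^2)$. It is psh on $U$, being a convex increasing function of $\log|g|^2<-2$, and it is smooth on $\{g\neq0\}$. There $i\partial\bar\partial\rho=i\partial\rho\wedge\bar\partial\rho$, because $\log|g|^2$ is pluriharmonic. Since $i\partial\bar\partial\rho\wedge(i\partial\bar\partial|z|^2)^{n-1}$ is a positive Radon measure on $U$, it follows that $|\partial\rho|^2=|dg|^2/(|g|^2\log^2|g|^2)$ is locally integrable. Your $\bar\partial\chi_\epsilon$ is exactly $-\epsilon\,\theta'(\cdot)\,\bar\partial\rho$.

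\textbf{If you keep the coarea route.} Drop the factor $t$ in your identity; it should not be there, though the conclusion survives. The real input is a locally uniform bound on the fibre areas $\mathcal{H}^{2n-2}(g^{-1}(w)\cap K)$. Applying coarea to $g:U\to\mathbb{C}$ itself gives $\int_K|dg|^2F(|g|)\,d\lambda=\int\mathcal{H}^{2n-2}(g^{-1}(w)\cap K)\,F(|w|)\,d\lambda(w)$ directly.

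\textbf{An implicit hypothesis.} Choosing $g\not\equiv0$ uses that $Z$ has empty interior. The statement leaves this implicit, and the lemma fails for $Z=\Omega$.
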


We also need the following approximation result of psh functions on Stein manifolds.
\begin{lemma}(\cite{FN(1980)} Theorem 5.5)\label{psh-approxi}
Let X be a Stein manifold, $\varphi$ be a psh function on X. Then there exists a sequence of smooth strictly psh functions $\{\varphi_j\}_{j\in\mathbb{N}}$ on X decreasingly converges to $\varphi$.
\end{lemma}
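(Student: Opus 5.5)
The statement is the classical regularization theorem of Forn{\ae}ss--Narasimhan. I would prove it by reducing to convolution in $\mathbb{C}^N$ and then gluing the local regularizations together with a strictly psh exhaustion function. First, by the embedding theorem of Remmert--Bishop--Narasimhan, I realize $X$ as a closed complex submanifold of some $\mathbb{C}^N$. By the Docquier--Grauert (Siu) theorem, there is an open neighborhood $U\supset X$ in $\mathbb{C}^N$ and a holomorphic retraction $\rho:U\to X$. Then $\Phi:=\varphi\circ\rho$ is psh on $U$, and $\Phi|_X=\varphi$.

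On $U_\varepsilon:=\{z\in U:\operatorname{dist}(z,\partial U)>\varepsilon\}$ I form the standard convolution $\Phi_\varepsilon:=\Phi*\chi_\varepsilon$ with a radial mollifier. These functions are smooth and psh. Because $\Phi$ is psh and $\chi$ is radial, $\Phi_\varepsilon$ decreases to $\Phi$ as $\varepsilon\downarrow0$ (the sub-mean-value property). Restricting to $X$ gives smooth psh functions that decrease to $\varphi$ on each compact $K\subset X$ once $\varepsilon<\varepsilon(K)$. To obtain strict plurisubharmonicity, I add $\delta_j\psi$. Here $\psi$ is a smooth strictly psh exhaustion of $X$, for instance $|z|^2|_X$, and $\delta_j\downarrow0$ is chosen so that the added terms still decrease.

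The main obstacle is globalizing. A single $\varepsilon$ does not work on all of $X$ because $\operatorname{dist}(X_c,\partial U)$ may tend to $0$ as $c\to\infty$, where $X_c=\{\psi<c\}$. I would handle this with Richberg-type gluing via a regularized maximum. Fix $j$ and choose radii $\varepsilon_{j,c}$ for the compact pieces $\overline{X_{c+1}}$, $c\in\mathbb{N}$, which are decreasing in both $j$ and $c$ and so small that $|\Phi_{\varepsilon_{j,c}}-\varphi|$ is controlled on $X_{c+1}$ where $\varphi$ is continuous. For general $\varphi$ I use only the one-sided property $\Phi_\varepsilon\ge\varphi$, which needs no continuity. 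Then I set
\[
\varphi_j:=\widetilde{\max}_{c}\Big(\Phi_{\varepsilon_{j,c}}|_X+\tfrac1j\psi - \lambda_c(\psi)\Big),
\]
where $\lambda_c$ are convex increasing cut-off corrections. They vanish on $X_{c-1}$ and are very large near $\partial X_{c+1}$, so only finitely many terms are active near each point, and each term is defined where it is active.

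The delicate step is to check three things: the $\lambda_c$ can be chosen so that on $X_{c}$ the term with index $c$ dominates the earlier ones up to an error absorbed by the strict positivity of $\tfrac1j\psi$; the regularized maximum preserves smoothness and strict plurisubharmonicity; and the resulting sequence is monotone. For monotonicity, I would first construct $\varphi_j$ with the bound $\varphi\le\varphi_j\le\varphi_{\varepsilon_j}+2^{-j}$ locally. Then I pass to a decreasing sequence by replacing $\varphi_j$ with a regularized minimum-type correction, $\varphi_j+2^{-j+1}$, and choose $\varepsilon_{j+1,c}$ inductively so that $\varphi_{j+1}+2^{-j}<\varphi_j+2^{-j+1}$. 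This inequality can always be arranged because the convolutions decrease in $\varepsilon$ and $\tfrac1j$ decreases.
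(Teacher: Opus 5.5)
\textbf{There is a genuine gap in the gluing step.} (The paper gives no proof of this lemma; it only quotes Fornaess--Narasimhan.) The step fails for a general psh $\varphi$. Write $T_c=\Phi_{\varepsilon_{j,c}}|_X+\frac1j\psi-\lambda_c(\psi)$.

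\emph{Why the hand-off needs two-sided control.} The Levi form of $T_c$ contains $\bigl(\frac1j-\lambda_c'\bigr)i\partial\bar\partial\psi-\lambda_c''\,i\partial\psi\wedge\bar\partial\psi$, and $i\partial\bar\partial\Phi_{\varepsilon_{j,c}}$ need not compensate. So $T_c$ is psh only where $\lambda_c'\lesssim1/j$. Since $\lambda_c$ is convex and vanishes on $X_{c-1}$, at such levels $\lambda_c=O(1/j)$. Where $T_c$ is handed over to later terms, both lie within the regularization margin of the maximum, so $T_c$ must still be psh there. Convolutions decrease with the radius, so every later term is $\le\Phi_{\varepsilon_{j,c+1}}+\frac1j\psi$. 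The hand-off therefore requires
\[
\Phi_{\varepsilon_{j,c}}-\Phi_{\varepsilon_{j,c+1}}\lesssim\frac1j\quad\text{on the level sets of }\psi\text{ where it happens.}
\]
This is a two-sided uniform estimate. For continuous $\varphi$ it follows from uniform convergence of mollifiers. The one-sided bound $\Phi_\varepsilon\ge\varphi$ you rely on says nothing about it, and it is false in general.

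\emph{Counterexample.} For $\varphi=\log|z_1|$ one has $\Phi_\varepsilon-\Phi_{\varepsilon'}=\log(\varepsilon/\varepsilon')$ along $\{z_1=0\}$. There $i\partial\bar\partial\Phi_\varepsilon$ vanishes in the tangent directions, so nothing compensates. You would need $\varepsilon_{j,c+1}/\varepsilon_{j,c}$ close to $1$. You neither impose this nor reconcile it with $\varepsilon_{j,c}\to0$ at the rate forced by $\operatorname{dist}(X_{c+1},\partial U)$.

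\emph{Two related problems.} First, your claim that on $X_c$ the term $c$ dominates the earlier ones runs against the same monotonicity, since $T_{c-1}\ge T_c$ on $X_{c-2}$. Second, every $\lambda_c$ vanishes on $X_0$, so infinitely many uncut terms accumulate at $\varphi+\psi/j$ there. Near points where $\varphi$ is continuous they lie within the margin, and the regularized maximum is not locally finite.

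\textbf{A repair using only upper semicontinuity.} Do not glue different radii at all.
\begin{enumerate}
\item For each $j$ take one radius $\varepsilon_j\downarrow0$ such that $\Phi_{\varepsilon_j}$ is defined near $K_{j+1}=\{\psi\le j+1\}$ and $\rho(B(x,\varepsilon_j))\subset K_{j+2}$ for $x\in K_{j+1}$. Then $\Phi_{\varepsilon_j}\le M(j+2):=\sup_{K_{j+2}}\varphi$ there.
\item Fix a convex increasing $\chi$ with $\chi'>0$ and $\chi(t)\ge M(t+2)+t+10$.
\item Let $\varphi_j$ be the regularized maximum of $\Phi_{\varepsilon_j}|_X+\frac1j\psi$ and $\chi(\psi)-j$ on $K_{j+1}$, and $\chi(\psi)-j$ outside. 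The second term wins by a margin near $\partial K_{j+1}$. Hence $\varphi_j$ is smooth and strictly psh, and $\varphi_j\ge\varphi+\psi/j$.
\item $\varphi_j\to\varphi$ pointwise, because $\chi(\psi(x))-j\to-\infty$.
\item For monotonicity, choose $\varepsilon_{j+1}$ and the regularization parameter so that $\varphi_{j+1}\le\varphi_j+2^{-j-1}$. On $K_{j+2}$ this is a Dini argument for the continuous functions $\Phi_\varepsilon+\frac{\psi}{j+1}-\varphi_j$, which decrease to a limit $\le0$. It uses the continuity of $\varphi_j$, not of $\varphi$. Your justification via monotonicity of convolutions in $\varepsilon$ gives only a pointwise statement.
\end{enumerate}
Then $\varphi_j+2^{-j}$ is the required decreasing sequence.
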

Next, let us recall the notion of singular Hermitian metrics and its positivity concepts on holomorphic vector bundles. 

Let $X$ be a complex manifold, $E$ be a holomorphic vector bundle of rank $r$ over $X$. A singular Hermitian metric on $E$ is a measurable map $h$ from the base manifold $X$ to the set of Hermitian forms on the fibers of $E$, which assigns to almost every point $x \in X$ a Hermitian form $h(x)$ on $E_x$ satisfying $0 < \det h(x) < +\infty$.(i.e. for almost every $x\in X$, $h(x)$ is an ordinary positive definite matrix)(\cite{PT(2018)})

For comparison with the earlier approximation results, we also recall the metric and curvature notions introduced by Lempert.

\begin{lempertmetric}[{Lempert~\cite{Lempert(2017)}}]
Let $E\to X$ be a holomorphic vector bundle of finite rank. A Hermitian metric in the sense of Lempert is a measurable squared-norm function $h:E\to[0,+\infty]$, with $0<\det h<+\infty$ almost everywhere, for which there is an increasing sequence of $C^2$-smooth Hermitian metrics $g_\nu$ on $E$ such that
\[
|e|_{g_\nu}^2\nearrow |e|_h^2
\qquad\text{for every }e\in E.
\]
The Chern curvature of $h$ is said to dominate $0$ in the sense of Nakano if the approximating metrics can be chosen to be $C^2$-smooth and Nakano semipositive in the usual sense. Thus the approximating metrics are defined on $E$ over the whole base $X$.
\end{lempertmetric}

It is well-known that, when $h$ is smooth, $h$ is Griffiths semi-positive if and only if $\log|u|^2_{h^*}$ is plurisubharmonic for any local holomorphic section $u$ of the dual bundle.(\cite{LiuXu(2024)}) This characterization naturally gives a definition of Griffiths semi-positivity for singular Hermitian metrics. Subsequently, based on Deng-Ning-Wang-Zhou's characterization of Nakano positivity of $C^2$ smooth Hermitian metrics(\cite{DNWZ(2021)}), Inayama(\cite{Inayama(2022)}) made the following definition of Nakano positivity of singular Hermitian metrics.
\begin{definition}(\cite{Liu(2026)} Definition 1.2)\label{sing Nak}
Let $h$ be a singular Hermitian metric on a holomorphic vector bundle $E$ over a complex manifold $X$.  $h$ is said to be Nakano semi-positive if $h$ is Griffiths semi-positive and is $L^2$ optimal, that is,
	for any Stein open subset $U$ such that $E|_U$ is trivial, any K\"ahler form $\omega_U$ on $U$, any $\psi \in {\rm Spsh}(U)\cap C^\infty(U)$ and any $\overline{\partial}$-closed $f \in C^\infty_{(n,1)} (U,E|_U)$, there exists  $u\in  L^2_{(n,0)} (U,\omega_U,E|_U,he^{-\psi})$ such that $\overline{\partial}u = f$ and
\begin{equation}\label{def Hormander estimate}
	\int_{U}^{} |u|^2_{\omega_U,h} e^{-\psi} dV_{\omega_U}
	\le \int_{U}^{} \langle B^{-1}_{\omega_U,\psi} f,f \rangle_{\omega_U,h} e^{-\psi} dV_{\omega_U},
	\end{equation}
	provided that the right hand side is finite, where $B_{\omega_U,\psi}=[i\partial\overline{\partial}\psi \otimes {\rm Id}_E, \Lambda_{\omega_U}]$.
\end{definition}
When rank$E=1$, $h$ is Nakano semi-positive in the sense of Definition \ref{sing Nak} is equivalent to $i\Theta_{(E,h)}\geq0$ in the sense of currents (See for example \cite{Demailly(2010)}). Hence Theorem \ref{thm:main} implies Theorem \ref{LiZhou1}, and Theorem \ref{main3} implies Theorem \ref{LiZ2}.

The implication needed to compare the thesis results with the present theorems follows from the preservation of singular Nakano semipositivity under increasing limits proved in~\cite[Section~2.3]{Liu(2026)}.

\begin{comparisonresult}[{Liu--Xiao--Yang--Zhou~\cite[Section~2.3]{Liu(2026)}}]
Let $h$ be a singular Hermitian metric on a holomorphic vector bundle $E\to X$, and let $g_\nu$ be singular Hermitian metrics on $E$ that are Nakano semipositive in the sense of Definition~\ref{sing Nak}. If $g_\nu$ increases pointwise to $h$, then $h$ is also Nakano semipositive in the sense of Definition~\ref{sing Nak}.
In particular, a metric whose Chern curvature dominates $0$ in the sense of Lempert is Griffiths semipositive and satisfies the optimal $L^2$ estimate condition of Definition~\ref{sing Nak}.
\end{comparisonresult}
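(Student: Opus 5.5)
The plan is to check the two requirements of Definition~\ref{sing Nak} for $h$ separately: Griffiths semipositivity and the optimal $L^2$ estimate. For both I pass to the limit in $\nu$, and I use the monotonicity $g_\nu\le g_{\nu+1}\le h$ throughout.

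\emph{Griffiths part.} Let $u$ be a local holomorphic section of $E^*$. Since $g_\nu$ increases to $h$, the dual norms $|u|^2_{g_\nu^*}$ decrease pointwise to $|u|^2_{h^*}$. Hence $\log|u|^2_{h^*}$ is a decreasing limit of psh functions, and so it is psh or $\equiv-\infty$ on each component. The second alternative is excluded because $0<\det h<+\infty$ almost everywhere. A useful by-product: since $\log|u|^2_{g_1^*}$ is psh, it is locally bounded above. Therefore $g_1$, and with it every $g_\nu$ and $h$, is locally bounded below by a multiple of a smooth metric. This gives $L^2_{g_\nu}\subset L^2_{loc}$ uniformly.

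\emph{$L^2$ part.} Fix $U$, $\omega_U$, $\psi$ and a $\bar\partial$-closed $f$ with
\[
C:=\int_U\langle B^{-1}_{\omega_U,\psi}f,f\rangle_{\omega_U,h}e^{-\psi}dV_{\omega_U}<\infty .
\]
Pointwise, diagonalize $B_{\omega_U,\psi}$ on the form part and write $f=\sum_j f_j$ in eigencomponents with eigenvalues $\lambda_j>0$. Since $B_{\omega_U,\psi}$ acts as $(\cdot)\otimes{\rm Id}_E$, we get
\[
\langle B^{-1}f,f\rangle_{g_\nu}=\sum_j\lambda_j^{-1}|f_j|^2_{g_\nu},
\]
which increases in $\nu$ and is at most the same expression for $h$. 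So the right-hand side of \eqref{def Hormander estimate} for $g_\nu$ is at most $C$. The hypothesis then gives $u_\nu$ with $\bar\partial u_\nu=f$ and $\int_U|u_\nu|^2_{g_\nu}e^{-\psi}\le C$.

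For fixed $\mu$ and all $\nu\ge\mu$ we have $\int_U|u_\nu|^2_{g_\mu}e^{-\psi}\le C$. By a diagonal argument I extract a subsequence converging weakly in $L^2(U,g_\mu e^{-\psi})$ for every $\mu$. The limits agree, since all these spaces embed continuously in $L^2_{loc}$ with respect to a smooth metric by the lower bound above. Call the common limit $u$. Then $\bar\partial u=f$ in the sense of distributions. By weak lower semicontinuity of the norm, $\int_U|u|^2_{g_\mu}e^{-\psi}\le C$ for every $\mu$. Letting $\mu\to\infty$ and applying monotone convergence yields the required estimate for $h$. I expect this compactness step to be the only delicate point: one has to keep track of the changing Hilbert spaces, and this is handled by the uniform local lower bound on the metrics.

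\emph{Lempert metrics.} For the ``in particular'' statement it suffices that each smooth Nakano semipositive $g_\nu$ satisfies Definition~\ref{sing Nak}. Smooth Nakano semipositivity implies Griffiths semipositivity, which gives the psh condition. The estimate \eqref{def Hormander estimate} is the classical H\"ormander--Demailly $L^2$ estimate on the Stein, hence complete K\"ahler, set $U$ for the metric $g_\nu e^{-\psi}$. Its curvature operator $[i\Theta_{g_\nu}+i\partial\bar\partial\psi\otimes{\rm Id}_E,\Lambda_{\omega_U}]$ dominates $B_{\omega_U,\psi}$ on $(n,1)$-forms. The first part of the proposition then applies to $g_\nu\nearrow h$.
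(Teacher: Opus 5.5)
Your proof is correct. The paper itself does not prove this proposition: it quotes it from Section~2.3 of Liu--Xiao--Yang--Zhou and mentions Inayama for the case of smooth approximants. So your write-up replaces a citation rather than offering an alternative to an argument in the paper.

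It follows the standard route:
\begin{itemize}
\item the Griffiths condition passes to the decreasing limit of the psh functions $\log|u|^2_{g_\nu^*}$;
\item the right-hand side of the estimate in Definition~\ref{sing Nak} is monotone in the metric, because $B_{\omega_U,\psi}$ acts only on the form factor;
\item after a diagonal extraction, the solutions $u_\nu$ converge weakly, using the uniform local lower bound $g_\nu\geq g_1\geq c_K\cdot(\text{smooth metric})$ that comes from Griffiths semipositivity;
\item monotone convergence then gives the final bound for $h$.
\end{itemize}

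Your version makes one thing visible that the citation hides. The optimal $L^2$ condition must be checked on every Stein $U$ with $E|_U$ trivial, so your reduction for Lempert metrics needs the smooth Nakano semipositive approximants $g_\nu$ to exist over all of $U$. This is guaranteed by the global form of Lempert's definition as recalled in the paper.

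One routine detail should be added. The H\"ormander--Demailly estimate for the smooth metrics $g_\nu e^{-\psi}$ is normally stated for data in $L^2$, but $f$ is only smooth on $U$. You should first solve on relatively compact Stein sublevel sets of an exhaustion of $U$, where the right-hand side is bounded by the one on $U$, and then take a weak limit exactly as in your second step.

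A small simplification is also available. The weak limits for different $\mu$ agree directly, because the inclusions $L^2(U,g_{\mu'}e^{-\psi})\subset L^2(U,g_\mu e^{-\psi})$ for $\mu'\geq\mu$ do not increase norms. The $L^2_{\rm loc}$ embedding is what you actually need, to pass $\bar\partial u_\nu=f$ to the limit.
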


The special case of increasing limits of smooth Nakano semipositive metrics is also discussed by Inayama~\cite{Inayama(2022)}. This implication shows that the hypotheses of the approximation results in~\cite{XiaoThesis} are covered by the positivity framework used here.

Recently, strong openness property has been established for multiplier submodule sheaf of singular Nakano semi-positive Hermitian vector bundles. The following is one of its statements.
\begin{theorem}(\cite{Liu(2026)} Corollary 5.4)\label{strong open v.b.}
Let $E$ be a trivial holomorphic vector bundle over a domain $D\subset\mathbb{C}^n$. $h$, $\{h_j\}_{j\in\mathbb{N}}$ be a family of singular Nakano semi-positive Hermitian metrics on $E$ such that $h_j\geq h$ and $\varphi_j:=-\log\det h_j$ converges to $\varphi:=-\log\det h$ locally in measure. Let $f$ be a holomorphic section of $E$ on $D$. Then $|f|^2_{h_j}$ converges to $|f|^2_{h}$ in $L^1_{loc}$ sense.     
\end{theorem}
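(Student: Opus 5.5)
The plan is to reduce the vector bundle statement to a scalar comparison between $h_j$ and $h$ through their determinants, and then to obtain the $L^1_{loc}$ convergence from a dominated or uniform-integrability argument. I assume throughout that $|f|^2_h\in L^1_{loc}(D)$, since otherwise there is nothing to prove. The ingredient I expect to be decisive is the strong openness of the multiplier submodule sheaf $\mathcal E(h)$ for Nakano semipositive $h$, which I treat as an assumed input (it is established in \cite{Liu(2026)}, as recalled in the introduction, but is not stated as a result earlier in this paper).

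\emph{Step 1: a pointwise sandwich.} At almost every point, $h\le h_j$ means that all eigenvalues of $h^{-1}h_j$ are $\ge 1$. Their product is $\det h_j/\det h=e^{\varphi-\varphi_j}$, so the largest eigenvalue is at most $e^{\varphi-\varphi_j}$. Hence
$$|f|^2_h\le |f|^2_{h_j}\le e^{\varphi-\varphi_j}|f|^2_h \quad\text{a.e.},$$
and in particular $\varphi_j\le\varphi$. Since $\varphi_j\to\varphi$ locally in measure, we get $|f|^2_{h_j}\to|f|^2_h$ locally in measure. This is because $e^{\varphi-\varphi_j}\to1$ in measure on the set where $\varphi>-\infty$, which has full measure since $\varphi$ is psh and not identically $-\infty$. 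By Vitali's theorem, $L^1_{loc}$ convergence then reduces to local uniform integrability of $\{|f|^2_{h_j}\}$. Concretely, it suffices to find $p>1$ such that $\sup_j\int_K |f|^{2p}_{h_j}$, or a weighted substitute, is finite for each compact $K$.

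\emph{Step 2: structural facts from Griffiths semipositivity.} Griffiths semipositivity gives that $\varphi_j=\log\det h_j^*$ and $\varphi$ are psh. It also gives that $h^*$ is locally bounded above, because $\log|u|^2_{h^*}$ is psh for constant frames $u$. Consequently $e^{\varphi}h=\operatorname{adj}(h^*)$ is locally bounded, so $|f|^2_he^{\varphi}\in L^\infty_{loc}$. Moreover $\varphi_j\to\varphi$ in measure with $\varphi_j\le\varphi$, so, as for psh functions, $\varphi_j\to\varphi$ in $L^1_{loc}$. The local uniform upper bounds on the $\varphi_j$ then allow Hartogs-type and Skoda-type control of $e^{-c\varphi_j}$ for small $c$.

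\emph{Step 3: the main obstacle, uniform higher integrability.} This is the step where strong openness has to enter, and I expect it to be the hardest. I would first try to use the strong openness of the multiplier submodule, $\mathcal E(h)=\bigcup_{\varepsilon>0}\mathcal E(he^{-\varepsilon\varphi})$ in the form proved in \cite{Liu(2026)}. It gives $\varepsilon>0$ with $|f|^2_he^{-\varepsilon\varphi}\in L^1_{loc}$. I would then split
$$|f|^2_he^{\varphi-\varphi_j}=\bigl(|f|^2_he^{-\varepsilon\varphi}\bigr)^{a}\bigl(|f|^2_he^{\varphi}\bigr)^{1-a}e^{b\varphi-\varphi_j}$$
with suitable $a\in(0,1)$ and $b$, and apply H\"older. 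The factor $|f|^2_he^{\varphi}$ is locally bounded by Step 2. What remains is a uniform local bound on $\int_K e^{-q(\varphi_j-\varphi)}$ for some $q>0$. For this I would follow the Guan--Zhou proof of the line bundle convergence $\int|F|^2e^{-\varphi_j}\to\int|F|^2e^{-\varphi}$: approximate via the twisted metrics $h_je^{-\varepsilon'\varphi_j}$, and use the $L^2$ optimality of Definition~\ref{sing Nak} to solve $\bar\partial$ locally with weights built from $\max(\varphi_j,\varphi-t)$. This would produce holomorphic sections close to $f$ with uniformly controlled $h_j$-norms.

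If that direct route fails, the fallback is to argue by contradiction exactly as in the scalar strong openness proof. Suppose $\limsup_j\int_K|f|^2_{h_j}>\int_K|f|^2_h$. Then the $L^2$-optimal extension or solution property for the metrics $h_j$, combined with the lower semicontinuity of the norms (since $h_j\ge h$), produces an $f'\equiv f$ modulo $\mathcal E(h_j)$ with strictly smaller $h$-norm. This would contradict the minimality, or coherence, of $\mathcal E(h)$ supplied by \cite{Liu(2026)}.
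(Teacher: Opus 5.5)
The paper does not prove this statement; it quotes it from \cite{Liu(2026)}, Corollary 5.4, so your proposal has to stand on its own. Your Steps 1 and 2 are correct: the sandwich $|f|^2_h\leq|f|^2_{h_j}\leq e^{\varphi-\varphi_j}|f|^2_h$, the convergence in measure, the Vitali reduction, and the local boundedness of $e^{\varphi}h=\operatorname{adj}(h^*)$ all hold. But Step 3 carries the entire content of the theorem and is only a plan, and the H\"older route as you set it up does not close.

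Pairing $(|f|^2_he^{-\varepsilon\varphi})^a\in L^{1/a}$ against the remaining factors forces a bound on $\sup_j\int_K e^{q(\varphi-\varphi_j)}$ with $q=1/(1-a)>1$. Keeping $e^{(b-1)\varphi}$ harmless needs $b=a(1+\varepsilon)\geq 1$, i.e.\ $q\geq 1+1/\varepsilon$. This is a large exponent, since the strong openness exponent $\varepsilon$ can be arbitrarily small; it is not ``some $q>0$''. Nothing in Step 2 gives such a bound. Uniform Skoda controls $e^{-c\varphi_j}$ only for small $c$. The only available comparison, $e^{q(\varphi-\varphi_j)}\leq Ce^{-q\varphi_j}$, is useless for large $q$ as soon as $\varphi$ has a positive Lelong number somewhere, because $\nu(\varphi_j,x)\geq\nu(\varphi,x)$. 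What you need is a relative, uniform-in-$j$ statement that $\varphi_j$ is not more singular than $\varphi$ in the integrability sense, and you give no argument for it.

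The $\bar\partial$ construction you sketch would produce holomorphic sections $F_j$ near $f$ with controlled $h_j$-norms. That is not a bound on this scalar integral. It also says nothing about $|f|^2_{h_j}$ unless you control $|f-F_j|^2_{h_j}$, which is the original problem. The fallback is not an argument either: no mechanism produces $f'$, and neither coherence nor any minimality property of $\mathcal{E}(h)$ is contradicted by a norm inequality of that kind.

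The missing idea is uniformity of strong openness along the sequence. The identity $\mathcal{E}(h)=\bigcup_{\varepsilon>0}\mathcal{E}(he^{-\varepsilon\varphi})$ concerns the single metric $h$ and says nothing about the $h_j$. What would suffice is a fixed $\varepsilon>0$ and a constant $C$ with $\int_K|f|^2_{h_j}e^{-\varepsilon\varphi_j}\leq C$ for all large $j$. Since $h_j\geq h$ gives $h_j^*\leq h^*$, the matrices $e^{\varphi_j}h_j=\operatorname{adj}(h_j^*)$ are locally bounded uniformly in $j$. H\"older with exponents $1+\varepsilon$ and $(1+\varepsilon)/\varepsilon$ then gives $\int_A|f|^2_{h_j}\leq C'|A|^{\varepsilon/(1+\varepsilon)}$, which is exactly the uniform integrability your Vitali step requires. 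Producing such an $\varepsilon$ uniformly in $j$ needs an effective (quantitative) form of strong openness. Its constants must depend only on data that stay controlled along the sequence, and it must be derived from the $L^2$-optimality of the $h_j$ themselves; compare the role of effectiveness in the scalar theory of Guan--Zhou. That ingredient is absent from your proposal.
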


\section{Proof of Theorem~\ref{thm:main}}

Fix $\chi\in C^\infty(\mathbb R)$ such that $0\leq\chi\leq1$, $\chi=1$ on $(-\infty,1]$, $\chi=0$ on $[2,+\infty)$, and $|\chi'|\leq2$.

\begin{lemma}\label{lem:cutoff}
Let $(X,\omega)$ be a Stein manifold with a K\"{a}hler metric $\omega$. Let $\Psi>0$ be a smooth strictly plurisubharmonic exhaustion function, and let $h$ be a Nakano semipositive singular Hermitian metric on $E$. Suppose that $f$ is a holomorphic section of $K_X\otimes E$ and that, for some $c\geq1$,
\begin{equation}\label{finite}
 \int_{\{\Psi<e^{2c}-1\}}|f|_{\omega,h}^2e^{-\Psi}dV_\omega<+\infty.
\end{equation}
Then the equation
\begin{equation}\label{eq:cutoff-equation}
 \bar\partial u
 =f\wedge\bar\partial\left(\chi\left(\frac{\log(1+\Psi)}{c}\right)\right)
\end{equation}
has a solution $u_c$ satisfying
\begin{equation}\label{eq:cutoff-bound}
 \int_X|u_c|_{\omega,h}^2e^{-\Psi}dV_\omega
 \leq\frac{4e^2}{c}
 \int_{\{\Psi<e^{2c}-1\}}|f|_{\omega,h}^2e^{-\Psi}dV_\omega.
\end{equation}
\end{lemma}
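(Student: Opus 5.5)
The plan is to solve \eqref{eq:cutoff-equation} directly with the optimal $L^2$ estimate of Definition~\ref{sing Nak}, using a weight tuned to the cut-off. Put $\eta:=\log(1+\Psi)/c$ and
\[
g:=f\wedge\bar\partial\chi(\eta)=\frac{\chi'(\eta)}{c}\,f\wedge\frac{\bar\partial\Psi}{1+\Psi}.
\]
Since $f$ is holomorphic and $\Psi,\chi$ are smooth, $g$ is a smooth $\bar\partial$-closed $(n,1)$-form with values in $E$. It is supported in $\{1\le\eta\le2\}=\{e^{c}-1\le\Psi\le e^{2c}-1\}$, which is compact because $\Psi$ is an exhaustion. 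As the weight I take
\[
\psi:=\Psi-\eta=\Psi-\tfrac1c\log(1+\Psi).
\]

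First I check that $\psi$ is admissible and compute its curvature. A direct computation gives
\[
i\partial\bar\partial\psi=\Bigl(1-\frac{1}{c(1+\Psi)}\Bigr)i\partial\bar\partial\Psi+\frac{1}{c}\,\frac{i\partial\Psi\wedge\bar\partial\Psi}{(1+\Psi)^2}.
\]
Because $c\ge1$ and $\Psi>0$, the first coefficient is positive. Hence $\psi$ is smooth and strictly psh, and
\[
i\partial\bar\partial\psi\ \ge\ c\, i\partial\eta\wedge\bar\partial\eta .
\]
Next I apply Lemma~\ref{l:positve} with $\theta=\sqrt c\,\partial\eta$, together with the monotonicity of $[\,\cdot\,,\Lambda_\omega]$ on positive $(1,1)$-forms. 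Since $g=\chi'(\eta)f\wedge\bar\partial\eta$, the standard Cauchy--Schwarz argument gives
\[
\langle B_{\omega,\psi}^{-1}g,g\rangle_{\omega,h}\le \frac{|\chi'(\eta)|^2}{c}\,|f|^2_{\omega,h}\le\frac4c\,|f|^2_{\omega,h}\,\mathbf 1_{\{\Psi<e^{2c}-1\}} .
\]

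On the support of $g$ we have $\eta\le2$, so $e^{-\psi}=e^{-\Psi}e^{\eta}\le e^2e^{-\Psi}$ there. By \eqref{finite}, the right-hand side of the Hörmander-type estimate is therefore finite and bounded by $\frac{4e^2}{c}\int_{\{\Psi<e^{2c}-1\}}|f|^2_{\omega,h}e^{-\Psi}dV_\omega$. Definition~\ref{sing Nak} then yields $u_c$ with $\bar\partial u_c=g$ and $\int_X|u_c|^2_{\omega,h}e^{-\psi}dV_\omega$ bounded by the same quantity. Since $\eta\ge0$ everywhere, $e^{-\Psi}\le e^{-\psi}$, and \eqref{eq:cutoff-bound} follows.

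The main obstacle is that Definition~\ref{sing Nak} is local: it only gives the estimate on Stein open sets $U$ over which $E$ is trivial, whereas here we need it on all of $X$ with $E$ possibly nontrivial. I would first try to show that the optimal $L^2$ condition globalizes to any Stein manifold with arbitrary $E$, for weights $\psi$ that are smooth and strictly psh. This globalization is presumably the kind of statement established in \cite{Liu(2026)} or \cite{DNWZ(2021)}. A route via trivializations of $E$ over a Stein exhaustion would require patching local solutions. Alternatively, I would try to realize $E$ as a direct summand of a trivial bundle on relatively compact Stein subdomains $X_m$ and solve there. On each $X_m$ the estimate is uniform in $m$, so a weak limit of the solutions $u_{c,m}$ in $L^2(he^{-\psi})$ gives a global solution with the same bound. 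Lemma~\ref{ext of dbar} may be needed to handle the singular set of $h$ in the weak-limit step.
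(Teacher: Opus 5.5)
The analytic core of your proposal matches the paper's proof: the weight $\psi=\Psi-\frac1c\log(1+\Psi)$, the curvature bound $i\partial\bar\partial\psi\ge c\,i\partial\eta\wedge\bar\partial\eta$, the Cauchy--Schwarz bound $\langle B^{-1}g,g\rangle\le|\chi'(\eta)|^2|f|^2/c$, and the comparison $e^{-\Psi}\le e^{-\psi}\le e^2e^{-\Psi}$ on the support. The gap is the step you yourself flag. Definition~\ref{sing Nak} only gives solutions on Stein sets $U$ with $E|_U$ trivial, and none of your three suggested routes closes this.

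\begin{enumerate}
\item Saying that a globalization is ``presumably'' in \cite{Liu(2026)} or \cite{DNWZ(2021)} is not an argument.
\item Patching local solutions over a trivializing cover would bring in constants depending on the cover and the cut-offs, and would destroy the sharp factor $4e^2/c$.
\item The direct-summand route is circular. If $E\oplus F$ is trivial on $X_m$, the definition still says nothing about a metric $h\oplus h_F$ there. Solving for the $E$-component is exactly the original problem on a set where $E$ need not be trivial.
\end{enumerate}
Your proposed use of Lemma~\ref{ext of dbar} on ``the singular set of $h$'' is also misplaced. That lemma removes \emph{analytic} sets, and the singular locus of $h$ need not be analytic.

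The missing idea is that no exhaustion or weak limit is needed. Since $X$ is Stein, choose global holomorphic sections $s_1,\dots,s_r$ of $E$ forming a frame at one point. Then $s_1\wedge\cdots\wedge s_r$ is a nontrivial section of $\det E$, and its zero set $Z$ is an analytic hypersurface. The complement $Y=X\setminus Z$ is a Stein open set, as the complement of a hypersurface in a Stein manifold, and $E|_Y$ is trivial. Apply Definition~\ref{sing Nak} once, with $U=Y$, $\omega_U=\omega|_Y$, and your $\psi$ and $g$ restricted to $Y$. Since $Z$ has measure zero, the estimate obtained on $Y$ is the desired estimate on $X$. Griffiths semipositivity makes $h$ locally bounded below by a smooth metric, because $\log|s|^2_{h^*}$ is psh and hence locally bounded above. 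So the solution $u$ has $L^2_{loc}$ coefficients on all of $X$. Lemma~\ref{ext of dbar}, applied to the analytic set $Z$, then upgrades $\bar\partial u=g$ on $Y$ to $\bar\partial u=g$ on $X$. This is how the paper's proof concludes.
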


\begin{proof}
Put
\[
 \phi_c=\Psi-\frac1c\log(1+\Psi).
\]
Since $c\geq1$ and $\Psi>0$, we have
\begin{equation}\label{eq:scalar-curvature}
\begin{aligned}
 i\partial\bar\partial\phi_c
 &=\left(1-\frac1{c(1+\Psi)}\right)i\partial\bar\partial\Psi
   +\frac1{c(1+\Psi)^2}i\partial\Psi\wedge\bar\partial\Psi\\
 &\geq\frac1{c(1+\Psi)^2}i\partial\Psi\wedge\bar\partial\Psi.
\end{aligned}
\end{equation}
In particular, $\phi_c$ is strictly plurisubharmonic.
Next, we need a pointwise calculation of $\langle[i\partial \Psi \wedge \overline{\partial} \Psi,\Lambda_{\omega}]^{-1}(f\wedge\bar\partial\Psi),f\wedge\bar\partial\Psi\rangle _{\omega,h}$.

At a given point $x_0\in X_j$, we can choose a basis $\{dz_1,\cdots,dz_n\}$ of $T_{x_0}^{*}X$ such that
$$\omega=i\sum_{j=1}^ndz_j\wedge d\bar{z_j},~~~~~~\partial\Psi=bdz_1$$
Let $\{e_1,\cdots,e_r\}$ be an orthonormal frame of $(E_{x_0},h_{x_0})$, $f=\sum_{l=1}^rc_ldz_1\wedge\cdots\wedge dz_n\otimes e_l$. Then $i\partial\Psi\wedge\bar\partial\Psi=i|b|^2dz_1\wedge d\bar{z_1}$ and $f\wedge\bar\partial\Psi=\sum_{l=1}^rc_l\bar{b}dz_1\wedge\cdots\wedge dz_n\wedge d\bar{z_1}\otimes e_l$. By [\cite{Demailly(2010)} Chapter VI, Proposition 5.8], we have
$$[i\partial \Psi \wedge \overline{\partial} \Psi,\Lambda_{\omega}]^{-1}(f\wedge\bar\partial\Psi)=\sum_{l=1}^rc_lb^{-1}dz_1\wedge\cdots\wedge dz_n\wedge d\bar{z_1}\otimes e_l$$
Hence we have 
\begin{equation}\label{pointwise calculation}
\begin{split}
\langle[i\partial \Psi \wedge \overline{\partial} \Psi,\Lambda_{\omega}]^{-1}(f \wedge\bar\partial\Psi),f \wedge\bar\partial\Psi\rangle _{\omega,h}= \sum_{l=1}^{r}|c_l|^2 
\\=|f|^2_{\omega,h}
\end{split} 
\end{equation}
We set
\[
 B=B_{\omega,\phi_c},\qquad
 \lambda=\chi'\left(\frac{\log(1+\Psi)}c\right)
          \frac1{c(1+\Psi)}f\wedge\bar\partial\Psi,
\]
Then by (\ref{pointwise calculation}) and Lemma \ref{l:positve}
\[
 \langle B^{-1}\lambda,\lambda\rangle_{\omega,h}
 \leq\frac1c\left|\chi'\left(\frac{\log(1+\Psi)}c\right)\right|^2|f|_{\omega,h}^2.
\]
The form $\lambda$ is smooth and $\bar\partial$-closed. Its support is contained in $\{e^c-1\leq\Psi\leq e^{2c}-1\}$, where
\[
 e^{-\phi_c}=e^{-\Psi}(1+\Psi)^{1/c}\leq e^2e^{-\Psi}.
\]
Since $X$ is Stein, we can choose $r$=rank$E$ holomorphic sections of $E$ on $X_j$ such that they generate the fiber of $E$ at some point of $X$. Then these sections generate the fiber of $E$ outside an analytic subset of $X$ and hence we can choose a Zariski open Stein subdomain $Y$ of $X$ such that $E$ is trivial on $Y$. Therefore, by Definition \ref{sing Nak}, there exists $u\in L^2_{(n,0)} (Y,\omega|_{Y},E|_{Y},he^{-\phi_c})$ such that $\bar\partial u=\lambda$ on $Y$. Since $h$ is Griffiths semi-positive, by [\cite{PT(2018)} Lemma 2.2.4], the coefficients of $u$ are in $L^2_{loc}$. Hence we have $\bar\partial u=v$ on $X$ by Lemma \ref{ext of dbar}, and
$$\int_{X}|u|^2_{\omega,h}e^{-\phi_c}dV_{\omega}\leq\int_{X}^{} \langle B^{-1}\lambda,\lambda \rangle_{\omega,h} e^{-\phi_c} dV_{\omega}\leq\frac{4e^2}{c}
 \int_{\{\Psi<e^{2c}-1\}}|f|_{\omega,h}^2e^{-\Psi}dV_\omega$$
Since $e^{-\phi_c}\geq e^{-\Psi}$, this proves \eqref{eq:cutoff-bound}.
\end{proof}

\begin{proof}[Proof of Theorem~\ref{thm:main}]
Adding a constant to $\Psi$, we may assume that $\Psi>0$. Let
\[
 f\in A^2(X,K_X\otimes E,he^{-\Psi}),\qquad
 M:=\int_X|f|_{\omega,h}^2e^{-\Psi}dV_\omega.
\]
Fix $c\geq1$ and set
\[
 D_c=\{\Psi<e^{2c}-1\}.
\]
Its closure is compact. Since $\Psi$ is locally bounded, we have
$$\int_{\overline{D_c}}|f|_{\omega,h}^2dV_\omega<\infty.$$
By the assumption (2) of Theorem\ref{main3}, we know that $h_j\geq h$ and $\det h_j$ converges to $\det h$ almost everywhere. Hence by Theorem\ref{strong open v.b.}, there exists an integer $N_c$ such that
\[
 \int_{D_c}|f|_{\omega,h_j}^2e^{-\Psi}dV_\omega<+\infty
 \qquad(j\geq N_c).
\]
And by dominated convergence theorem,
\[
 \int_{D_c}|f|_{\omega,h_j}^2e^{-\Psi}dV_\omega
 \longrightarrow\int_{D_c}|f|_{\omega,h}^2e^{-\Psi}dV_\omega.
\]
as $j$ tends to $\infty$. Choose $j_c\geq N_c$ so that
\begin{equation}\label{eq:compact-integral}
 \int_{D_c}|f|_{\omega, h_{j_c}}^2e^{-\Psi}dV_\omega
 \leq2\int_{D_c}|f|_{\omega,h}^2e^{-\Psi}dV_\omega\leq2M.
\end{equation}

By Lemma~\ref{psh-approxi}, there are smooth strictly plurisubharmonic functions $\Psi_k$ on $X$ decreasing to $\Psi$. Since $\Psi_k\geq\Psi>0$, each $\Psi_k$ is also an exhaustion function. Apply Lemma~\ref{lem:cutoff} to $h_{j_c}$ and $\Psi_k$. It gives a solution $u_{c,j_c,k}$ satisfying
$$\bar\partial u_{c,j_c,k}
 =f\wedge\bar\partial\left(\chi\left(\frac{\log(1+\Psi_k)}{c}\right)\right).$$
 with estimates
\begin{equation}\label{eq:singular-solutions}
\begin{aligned}
 \int_X|u_{c,j_c,k}|_{\omega,h_{j_c}}^2e^{-\Psi_k}dV_\omega
 &\leq\frac{4e^2}{c}
 \int_{\{\Psi_k<e^{2c}-1\}}|f|_{\omega,h_{j_c}}^2e^{-\Psi_k}dV_\omega\\
 &\leq\frac{4e^2}{c}\int_{D_c}|f|_{\omega,h_{j_c}}^2e^{-\Psi}dV_\omega
 \leq\frac{8e^2}{c}M.
\end{aligned}
\end{equation}
Here the metric $h_{j_c}$ is fixed. Set
\begin{equation}\label{eq:holomorphic-approximants}
 F_{c,j_c,k}
 =\chi\left(\frac{\log(1+\Psi_k)}c\right)f-u_{c,j_c,k}.
\end{equation}
Then $F_{c,j_c,k}$ is a holomorphic section of $K_X\otimes E$. Moreover, \eqref{eq:compact-integral} and \eqref{eq:singular-solutions} imply
\[
 \int_X|F_{c,j_c,k}|_{\omega,h_{j_c}}^2e^{-\Psi_k}dV_\omega
 \leq\left(4+\frac{16e^2}{c}\right)M.
\]
Since $h_{j_c}\geq h$,$\Psi_k\leq\Psi_1$ and $h$ is Griffith semi-positive, by [\cite{PT(2018)} Lemma 2.2.4], we have uniform ordinary $L^2$ bounds of $F_{c,j_c,k}$ on compact subsets with respect to $k$. Hence there exists a subsequence of $F_{c,j_c,k}$ converging uniformly on compact subsets to a holomorphic section $F_{c,j_c}$. Fatou's lemma and \eqref{eq:singular-solutions} give
\begin{equation}\label{eq:limit-estimate}
 \int_X\left|F_{c,j_c}-\chi\left(\frac{\log(1+\Psi)}c\right)f\right|_{\omega, h_{j_c}}^2
 e^{-\Psi}dV_\omega
 \leq\frac{8e^2}{c}M.
\end{equation}
By \eqref{eq:compact-integral}, the cutoff section is square integrable for $h_{j_c}e^{-\Psi}$. Hence
\[
 F_{c,j_c}\in A^2(X,K_X\otimes E,h_{j_c}e^{-\Psi}).
\]
Finally, $h\leq h_{j_c}$ and \eqref{eq:limit-estimate} imply
\begin{equation}\label{eq:main-error}
\begin{aligned}
 \int_X|F_{c,j_c}-f|_{\omega,h}^2e^{-\Psi}dV_\omega
 &\leq\frac{16e^2}{c}M
   +2\int_{\{\Psi>e^c-1\}}|f|_{\omega,h}^2e^{-\Psi}dV_\omega\\
 &\longrightarrow0\qquad(c\to+\infty).
\end{aligned}
\end{equation}
This proves the theorem.
\end{proof}

\section{Proof of Theorem \ref{main3}}
In this section, we prove Theorem \ref{main3} following the idea of \cite{FW(2018)}. Let us recall the following weighted $\bar\partial$ estimate established by Berndtsson.

\begin{theorem} [\cite{Berndtsson(2001)}] \label{th:Hormander1}
Let $\Omega \subset \mathbb{C}^n$ be a pseudoconvex domain and $\varphi \in psh(\Omega)$.  $\psi$ is a $C^2$ real function satisfying
$$
r i\partial \overline{\partial} (\varphi+\psi) \geq i\partial \psi \wedge \overline{\partial} \psi
$$
in the sense of distributions for some $0<r<1$. Then for each $\overline{\partial}$-closed $(0,1)$-form $v$, there is a  solution of $\overline{\partial} u =v$ which satisfies
$$
\int _{\Omega} |u|^2e^{\psi-\varphi}d\lambda \leq \frac{1}{(1-\sqrt{r})^2} \int _{\Omega} |v|_\Theta^2e^{\psi-\varphi}d\lambda
$$
for every continuous positive $(1,1)-$ form $\Theta$ with $i\partial \overline{\partial}(\varphi+\psi)\ge \Theta$ in the sense of distributions.
\end{theorem}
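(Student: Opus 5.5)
The plan is the standard ``twisting'' argument: apply H\"ormander's estimate with the weight $\varphi+\psi$ to the function $w:=ue^{\psi}$, where $u$ is the $L^2(e^{-\varphi})$-minimal solution. The cross term coming from $\bar\partial\psi$ is then absorbed using the hypothesis $r\,i\partial\bar\partial(\varphi+\psi)\ge i\partial\psi\wedge\bar\partial\psi$.

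\emph{Reduction.} First reduce to the case where $\Omega$ is replaced by a relatively compact smoothly bounded pseudoconvex subdomain $\Omega'\Subset\Omega$; on such a domain $\psi$ is bounded. Assume the right-hand side is finite, otherwise there is nothing to prove. Since $\psi$ is bounded on $\Omega'$, the form $v$ lies in $L^2(\Omega',e^{-\varphi})$ with finite $\Theta$-norm. Hence $\bar\partial u=v$ has a solution in $L^2(\Omega',e^{-\varphi})$, and we take $u$ to be the one of minimal norm, i.e.\ $u\perp \ker\bar\partial$ in $L^2(\Omega',e^{-\varphi})$. Boundedness of $\psi$ also gives $u\in L^2(\Omega',e^{\psi-\varphi})$, which will be needed for the absorption step.

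\emph{Key step.} Put $w=ue^{\psi}$. For every holomorphic $h\in L^2(\Omega',e^{-\varphi-\psi})$ we have $\int w\bar h e^{-\varphi-\psi}=\int u\bar h e^{-\varphi}=0$, so $w$ is the minimal solution of
\[
\bar\partial w=e^{\psi}(v+u\bar\partial\psi)
\]
in $L^2(\Omega',e^{-(\varphi+\psi)})$. By hypothesis $\varphi+\psi$ is psh. H\"ormander's estimate for the minimal solution, with the weight's own Hessian $i\partial\bar\partial(\varphi+\psi)$, gives
\[
\int|u|^2e^{\psi-\varphi}\le\int|v+u\bar\partial\psi|^2_{i\partial\bar\partial(\varphi+\psi)}e^{\psi-\varphi}.
\]
We then take square roots and apply Minkowski's inequality, using two pointwise bounds:
\[
|v|_{i\partial\bar\partial(\varphi+\psi)}\le|v|_\Theta \quad\text{since } i\partial\bar\partial(\varphi+\psi)\ge\Theta,
\]
\[
|\bar\partial\psi|^2_{i\partial\bar\partial(\varphi+\psi)}\le r \quad\text{from } r\,i\partial\bar\partial(\varphi+\psi)\ge i\partial\psi\wedge\bar\partial\psi.
\]
This yields
\[
\|u\|\le\|v\|_\Theta+\sqrt r\,\|u\|
\]
in $L^2(e^{\psi-\varphi})$. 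Since $\|u\|<\infty$, rearranging gives the constant $(1-\sqrt r)^{-2}$.

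\emph{Main obstacle and conclusion.} The delicate point is that $\varphi$ is only psh, so $i\partial\bar\partial(\varphi+\psi)$ is a current and the $|\cdot|_{i\partial\bar\partial(\varphi+\psi)}$ norms are not literally defined. I would first run the argument with $\varphi$ replaced by smooth psh $\varphi_\epsilon\downarrow\varphi$ (convolution on a slightly smaller domain). The main difficulty is that the curvature hypothesis and the inequality $\ge\Theta$ are not obviously preserved under this regularization. The first thing to try is to replace $\psi$ by $\psi_\epsilon$ as well, accept $r+\delta$ and $(1-\delta)\Theta$ with $\Theta$ continuous, and let $\delta\to0$ at the end. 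The resulting solutions have uniformly bounded weighted norms. Weak limits, first as $\epsilon\to0$ (with Fatou and monotone convergence, since $e^{-\varphi_\epsilon}\uparrow e^{-\varphi}$) and then along an exhaustion $\Omega'\uparrow\Omega$ by a diagonal argument, produce the desired solution on $\Omega$ with the stated bound.
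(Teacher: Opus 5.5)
Your proof is correct and is essentially the argument the paper gives for its vector-bundle generalization, Proposition~\ref{Berndtsson}; the paper itself only cites this scalar statement from Berndtsson. In both, you take the minimal solution in $L^2(e^{-\varphi})$, note that $ue^{\psi}$ is then minimal in $L^2(e^{-\varphi-\psi})$, apply the H\"ormander-type estimate with the Hessian of $\varphi+\psi$, absorb the $u\,\bar\partial\psi$ term via the $r$-hypothesis, and pass to the limit over regularizations and an exhaustion by weak compactness.

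The differences are cosmetic. First, you use Minkowski where the paper splits with weights $1+\frac1t$, $1+t$ and optimizes at $t=\frac{1}{\sqrt r}-1$; both give the same constant $(1-\sqrt r)^{-2}$. Second, you build in slack $r+\delta$ and $(1-\delta)\Theta$, which is legitimate because $\Theta$ is continuous and positive definite on compacts. The paper instead adds $\frac1j\|z\|^2$ to the smoothed $\varphi$, following Fornaess--Wu, which absorbs the regularization errors without needing $\Theta$ to be strictly positive.
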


In order to prove Theorem \ref{main3}, we first generalize Berndtsson's result to vector bundle valued forms with possibly singular Hermitian metrics on Stein manifolds. And this proposition seems useful in its own right. 
\begin{proposition}\label{Berndtsson}
Let $(X,\omega)$ be a $n$-dimensional Stein manifold with a K\"{a}hler metric $\omega$. $\varphi \in psh(X)$.  $\psi$ is a $C^{\infty}$-real function satisfying
$$
r i\partial \overline{\partial} (\varphi+\psi) \geq i\partial \psi \wedge \overline{\partial} \psi
$$
in the sense of distributions for some $0<r<1$. $E$ be a Hermitian vector bundle over $X$ with singular metric $h$ which is Nakano semi-positive. Then for each smooth $\overline{\partial}$-closed $(n,1)$-form $v$ with value in E, there is a  solution of $\overline{\partial} u =v$ which satisfies
\begin{eqnarray}\label{ineq:estimate1}
\int _{X} |u|_{\omega,h}^2e^{\psi-\varphi}dV_{\omega} \leq \frac{1}{(1-\sqrt{r})^2} \int _{X}\langle[\Theta,\Lambda_{\omega}]^{-1}v,v\rangle _{\omega,h}e^{\psi-\varphi}dV_{\omega}
\end{eqnarray}
for every continuous positive $(1,1)-$ form $\Theta$ with $i\partial \overline{\partial}(\varphi+\psi)\ge \Theta$ in the sense of distributions.
\end{proposition}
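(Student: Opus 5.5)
Berndtsson's classical trick is to apply the H\"ormander-type estimate with the modified weight $\varphi+\psi$ to the form $ve^{\psi}$, rather than to $v$, and then recover $u$ by duality. In our singular vector bundle setting the only $L^2$ input we have is the optimal estimate of Definition~\ref{sing Nak}. So we first work locally and with smooth strictly psh weights, and then pass to limits. Concretely, as in the proof of Lemma~\ref{lem:cutoff}, we choose a Zariski open Stein subdomain $Y\subset X$ on which $E$ is trivial. We then approximate $\varphi$ by smooth strictly psh functions $\varphi_j\searrow\varphi$ (Lemma~\ref{psh-approxi}). Replacing $\varphi$ by $\varphi_j+\epsilon|\cdot|$-type strictly psh perturbations and $\Theta$ by $\Theta$ minus a small error, we may assume the curvature hypothesis holds with $\varphi_j$ up to an error that vanishes in the limit. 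We also exhaust $Y$ by relatively compact Stein domains $Y_m$ so that all right-hand sides are finite.

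The key step is to take $\alpha=\varphi+\psi$ (smoothed, strictly psh), and to let $u_0$ be the $L^2(he^{-\varphi+\psi})$-minimal solution of $\bar\partial u=v$ on $Y_m$. Minimality means $u_0\perp\ker\bar\partial$ in $L^2(he^{\psi-\varphi})$. Equivalently, $w:=u_0e^{\psi}$ is orthogonal to holomorphic sections in $L^2(he^{-\varphi-\psi}\cdot e^{\psi}\cdots)$. More precisely, $w$ is the minimal solution in $L^2(he^{-(\varphi+\psi)})$ of
\[
\bar\partial w=e^{\psi}\bigl(v+\bar\partial\psi\wedge u_0\bigr).
\]
To solve this equation we cannot simply invoke minimality in Definition~\ref{sing Nak}, since that definition only asserts existence with the bound. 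The remedy is to observe that the minimal solution has norm at most that of any solution. Applying the estimate \eqref{def Hormander estimate} with weight $\varphi+\psi$ and the operator $B=[i\partial\bar\partial(\varphi+\psi),\Lambda_\omega]$ then gives
\[
\int|u_0|^2_he^{\psi-\varphi}\le\int\langle B^{-1}(v+\bar\partial\psi\wedge u_0),v+\bar\partial\psi\wedge u_0\rangle_h e^{\psi-\varphi}.
\]
This uses $|w|^2e^{-\varphi-\psi}=|u_0|^2e^{\psi-\varphi}$; the potential mismatch between $h$ and the trivialization is harmless, since everything is fiberwise. Next we apply the triangle inequality in the $B^{-1}$-norm. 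For the term with $\partial\psi$ we use Lemma~\ref{l:positve} and the pointwise computation \eqref{pointwise calculation}: since $rB\ge[i\partial\psi\wedge\bar\partial\psi,\Lambda_\omega]$, we get $\langle B^{-1}(\bar\partial\psi\wedge u_0),\bar\partial\psi\wedge u_0\rangle\le r|u_0|^2$. Writing $A^2=\int|u_0|^2e^{\psi-\varphi}$, this yields $A\le\|v\|_{B^{-1}}+\sqrt rA$, i.e.\ $A\le(1-\sqrt r)^{-1}\|v\|_{B^{-1}}$. Finally $B\ge[\Theta,\Lambda_\omega]$ gives the stated bound \eqref{ineq:estimate1}. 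An a priori finiteness of $A$ is needed to absorb the term, and it holds because $u_0$ is in $L^2$ on $Y_m$.

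The main obstacle is justifying the identification of $w$ as a minimal solution, together with the absorption step, when $h$ is only singular. The orthogonality argument needs $v+\bar\partial\psi\wedge u_0$ to be admissible in Definition~\ref{sing Nak}, which only allows smooth $\bar\partial$-closed data. Since $u_0$ is merely $L^2$, I would first try to extend the optimal estimate to $L^2$ $\bar\partial$-closed data with finite right-hand side by a duality argument. That the estimate holds for smooth data gives the functional inequality $|\langle f,\alpha\rangle|^2\le C\|\bar\partial^*\alpha\|^2$ on a dense set, and this extends by continuity. Alternatively, one can work directly with the dual formulation, proving $|\langle v,\alpha\rangle|\le(1-\sqrt r)^{-1}\|v\|_{[\Theta,\Lambda]^{-1}}\|\bar\partial^*_{\psi-\varphi}\alpha\|$, which is Berndtsson's original route.

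Once the estimate holds on each $Y_m$ with $\varphi_j$, we let $m\to\infty$ and $j\to\infty$, using weak compactness in $L^2$ and monotonicity of $e^{-\varphi_j}$. Lemma~\ref{ext of dbar} then extends $\bar\partial u=v$ across $X\setminus Y$; the required $L^2_{loc}$ bound on $u$ follows from Griffiths semipositivity via [\cite{PT(2018)} Lemma 2.2.4], as in Lemma~\ref{lem:cutoff}.
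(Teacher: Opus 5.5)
Your overall plan (Berndtsson's trick through a minimal solution, exhaustion, smoothing of $\varphi$, weak limits, extension across $X\setminus Y$) matches the paper's, but two steps fail as written.

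The main gap is the regularization. Lemma~\ref{psh-approxi} gives smooth strictly psh $\varphi_j\searrow\varphi$, but decreasing convergence only yields weak convergence $i\partial\bar\partial\varphi_j\to i\partial\bar\partial\varphi$. So the lower bounds $i\partial\bar\partial\varphi\ge\frac1r i\partial\psi\wedge\bar\partial\psi-i\partial\bar\partial\psi$ and $i\partial\bar\partial\varphi\ge\Theta-i\partial\bar\partial\psi$ are not inherited by $\varphi_j$. For example, take $\varphi=\|z\|^2$, let $\ell_j$ be the tangent pluriharmonic function at a point raised by $1/j$, and let $\varphi_j$ be a suitably regularized $\max\{\|z\|^2,\ell_j\}$. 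Then $\varphi_j$ decreases to $\varphi$, yet $i\partial\bar\partial\varphi_j$ vanishes on a small ball for every $j$. Adding $\epsilon\|z\|^2$ only absorbs errors that are uniformly small on compacts. Your claim that the curvature hypothesis holds for $\varphi_j$ ``up to an error that vanishes in the limit'' therefore has no support.

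Yet you need both inequalities pointwise for $B_j=[i\partial\bar\partial(\varphi_j+\psi),\Lambda_\omega]$. The first gives $\langle B_j^{-1}(\bar\partial\psi\wedge u_0),\bar\partial\psi\wedge u_0\rangle\le r|u_0|^2$. The second compares with the right-hand side of \eqref{ineq:estimate1}. That comparison, together with $\varphi_j\ge\varphi$ and a reduction to a finite global right-hand side, is also what makes your right-hand sides on $Y_m$ finite; relative compactness alone does not, since $h$ is singular.

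The paper fills this gap with the Fornaess--Wu device. Embed $X$ as a closed submanifold of $\mathbb{C}^N$ and take a holomorphic retraction $\pi\colon U\to X$. Convolve $\varphi\circ\pi$ in $\mathbb{C}^N$; convolution commutes with $i\partial\bar\partial$, so lower bounds by continuous forms survive up to a locally uniformly small error. Add $\frac1j\|z\|^2$ to absorb that error and restrict to $X$. The resulting $\varphi_j\searrow\varphi$ satisfy both inequalities exactly on $X_j$. On your Zariski open $Y$ there is no global convolution, so some such device is indispensable.

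Second, you chose the wrong minimal solution. If $u_0\perp\ker\bar\partial$ in $L^2(he^{\psi-\varphi})$, then $\int\langle u_0e^{\psi},g\rangle_he^{-\varphi-\psi}=\int\langle u_0,g\rangle_he^{-\varphi}$, which need not vanish. So $w=u_0e^{\psi}$ is orthogonal to $\ker\bar\partial$ in $L^2(he^{-\varphi})$, not in $L^2(he^{-\varphi-\psi})$. It is therefore not the minimal solution for the weight $\varphi+\psi$, and the bound from Definition~\ref{sing Nak} does not transfer to it. You must take $u_0$ minimal in $L^2(he^{-\varphi_j})$, as the paper does. On a relatively compact domain, where $\psi$ is bounded, this is the same space as $L^2(he^{-\varphi_j-\psi})$.

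Finally, the obstacle you single out is not one. Since $\bar\partial u_0=v$ is smooth and $\bar\partial$ is hypoelliptic in bidegree $(n,0)$, $u_0$ is smooth. Hence $\bar\partial(u_0e^{\psi})=e^{\psi}(v+\bar\partial\psi\wedge u_0)$ is a smooth $\bar\partial$-closed form, and Definition~\ref{sing Nak} applies directly. The duality extension you propose is unnecessary. Its ``extends by continuity'' step would also need a density result for smooth $\bar\partial$-closed forms in the weighted norm, which you have not supplied.
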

\begin{proof}
Since $X$ is Stein, we may assume the right hand side of (\ref{ineq:estimate1}) is finite.

We choose an increasing sequence of Stein domains $X_j$ in $X$ such that $\cup_{j=1}^{\infty}X_j=X$ and $X_j\subset\subset X_{j+1}$.

Since $X$ is Stein, we may assume $X$ is a closed complex submanifold of $\mathbb{C}^n$ and $U$ is a Stein open neighborhood of $X$ in $\mathbb{C}^n$. Let $\pi: U\to X$ be a holomorphic retraction map and $l: X\to U$ be the inclusion. Then 
$$
r i\partial \overline{\partial} (\varphi\circ\pi+\psi\circ\pi) \geq i\partial (\psi\circ\pi) \wedge \overline{\partial} (\psi\circ\pi)
$$
and
$$i\partial \overline{\partial}(\varphi\circ\pi+\psi\circ\pi)\ge \pi^{*}\Theta$$ 
hold on $U$.

We can choose Stein domains $U_j$ in $U$ such that $\cup_{j=1}^{\infty}U_j=U$ and $X_j\subset\subset U_j\subset\subset U_{j+1}$.

As in the proof of Theorem 2.3 in \cite{FW(2018)}             , for each $j$, we take $\tilde{\varphi_j}=(\varphi\circ\pi)*\theta_{\epsilon_j}+\frac{1}{j}\|z\|^2$ on $U_{j+1}$ with $\theta$ be the standard smoothing kernel and $\epsilon_j\to 0$ rapidly as $j\to\infty$. Then we have the decreasing limit $\lim_{j\to\infty}\tilde{\varphi_j}=\varphi\circ\pi$ on $U$, amd 
$$
r i\partial \overline{\partial} (\tilde{\varphi_j}+\psi\circ\pi) \geq i\partial (\psi\circ\pi) \wedge \overline{\partial} (\psi\circ\pi)
$$
and
$$i\partial \overline{\partial}(\tilde{\varphi_j}+\psi\circ\pi)\ge \pi^{*}\Theta$$ 
hold on $U_j$.

Then $\{\varphi_j:=\tilde{\varphi_j}\circ l\}$ is a decreasing sequence of smooth strictly psh functions on $X$ such that $\lim_{j\to\infty}\varphi_j=\varphi$, and with
\begin{equation}\label{34}
r i\partial \overline{\partial} (\varphi_j+\psi) \geq i\partial \psi \wedge \overline{\partial} \psi
\end{equation}
and
\begin{equation}\label{35}
i\partial \overline{\partial}(\varphi_j+\psi)\geq \Theta
\end{equation}
holding on $X_j$.

By Lemma \ref{l:positve}, for any positive $(1,1)$ form $\beta$, we have $[\beta,\Lambda_{\omega}]$ is a positive operator on $\Lambda^{n,1}T^{*}_{x_0}X$. Thus by the inequality (\ref{35}), we have
$$\langle[\Theta,\Lambda_{\omega}]^{-1}v,v\rangle _{\omega,h}\geq\langle[i\partial \overline{\partial}(\varphi_j+\psi),\Lambda_{\omega}]^{-1}v,v\rangle _{\omega,h}$$
Since $\psi$ is bounded on $X_j$, by our assumption, we have $$\int_{X_j}\langle[i\partial \overline{\partial}(\varphi_j+\psi),\Lambda_{\omega}]^{-1}v,v\rangle _{\omega,h}e^{-\varphi_j-\psi}dV_{\omega}<\infty$$

Therefore by Definition \ref{sing Nak}, as in the proof of Lemma \ref{lem:cutoff}, we can find a solution $u_j$ of the equation $\bar\partial u=v$ on $X_j$ with estimate:
$$\int_{X_j}|u_j|^2_{\omega,h}e^{-\varphi_j-\psi}dV_{\omega}<\infty$$
We may assume $u_j$ is chosen to be the unique solution that is perpendicular to all holomorphic (n,0)-forms in $L^2_{(n,0)} (X_j,\omega|_{X_j},E|_{X_j},he^{-\varphi_j})$.

Because of the boundedness of $\psi$ on $X_j$, we have 
$$L^2_{(n,0)} (X_j,\omega|_{X_j},E|_{X_j},he^{-\varphi_j-\psi})=L^2_{(n,0)} (X_j,\omega|_{X_j},E|_{X_j},he^{-\varphi_j})$$
and $u_je^{\psi}\in L^2_{(n,0)} (X_j,\omega|_{X_j},E|_{X_j},he^{-\varphi_j-\psi})$. Furthermore we have $u_je^{\psi}\bot \ker\bar\partial$ in $L^2_{(n,0)} (X_j,\omega|_{X_j},E|_{X_j},he^{-\varphi_j-\psi})$. i.e.
$$\int_{X_j}\langle u_je^{\psi},g\rangle_{\omega,h}e^{-\varphi_j-\psi}dV_{\omega}=\int_{X_j}\langle u_j,g\rangle_{\omega,h}e^{-\varphi_j}dV_{\omega}=0$$
for all holomorphic (n,0)-forms in $L^2_{(n,0)} (X_j,\omega|_{X_j},E|_{X_j},he^{-\varphi_j-\psi})$.

Thus $u_je^{\psi}$ is the $L^2_{(n,0)} (X_j,\omega|_{X_j},E|_{X_j},he^{-\varphi_j-\psi})$-minimal solution of the equation
$$\bar\partial\tilde{u}=\bar\partial(u_je^{\psi})$$
Since $\bar\partial$ is hypoelliptic in bidegree $(n,0)$[\cite{Demailly(2010)} Chapter I, Corollary 3.30], we have $u_j$ and hence $\bar\partial(u_je^{\psi})$ is smooth on $X_j$. Therefore, by Definition \ref{sing Nak}, we have
\begin{equation}\label{36}
\begin{split}
\int_{X_j}|u_j|^2_{\omega,h}e^{\psi-\varphi_j}dV_{\omega}=&\int_{X_j}|u_je^{\psi}|^2_{\omega,h}e^{-\varphi_j-\psi}dV_{\omega}
\\\leq&\int_{X_j}\langle[i\partial \overline{\partial}(\varphi_j+\psi),\Lambda_{\omega}]^{-1}\bar\partial(u_je^{\psi}),\bar\partial(u_je^{\psi})\rangle _{\omega,h}e^{-\varphi_j-\psi}dV_{\omega}
\\=&\int_{X_j}\langle[i\partial \overline{\partial}(\varphi_j+\psi),\Lambda_{\omega}]^{-1}(v+u_j\wedge\bar\partial\psi),v+u_j\wedge\bar\partial\psi\rangle _{\omega,h}e^{\psi-\varphi_j}dV_{\omega}
\\\leq&(1+\frac{1}{t})\int_{X_j}\langle[i\partial \overline{\partial}(\varphi_j+\psi),\Lambda_{\omega}]^{-1}v,v\rangle _{\omega,h}e^{\psi-\varphi_j}dV_{\omega}
\\&+(1+t)\int_{X_j}\langle[i\partial \overline{\partial}(\varphi_j+\psi),\Lambda_{\omega}]^{-1}(u_j\wedge\bar\partial\psi),u_j\wedge\bar\partial\psi\rangle _{\omega,h}e^{\psi-\varphi_j}dV_{\omega}
\\\leq&(1+\frac{1}{t})\int_{X_j}\langle[\Theta,\Lambda_{\omega}]^{-1}v,v\rangle _{\omega,h}e^{\psi-\varphi_j}dV_{\omega}
\\&+(1+t)\int_{X_j}\langle[\frac{1}{r}i\partial \psi \wedge \overline{\partial} \psi,\Lambda_{\omega}]^{-1}(u_j\wedge\bar\partial\psi),u_j\wedge\bar\partial\psi\rangle _{\omega,h}e^{\psi-\varphi_j}dV_{\omega}
\end{split}
\end{equation}
where $0<t<1$, and the last inequality is a consequence of the inequalities (\ref{34}) and (\ref{35}).

By a similar calculation as in the proof of Lemma \ref{lem:cutoff}, we have 
\begin{equation}\label{37}
\langle[i\partial \psi \wedge \overline{\partial} \psi,\Lambda_{\omega}]^{-1}(u_j\wedge\bar\partial\psi),u_j\wedge\bar\partial\psi\rangle _{\omega,h}=|u_j|^2_{\omega,h}  
\end{equation}
Because $\psi$ is bounded on $X_j$, we have
$$\int_{X_j}|u_j|^2_{\omega,h}e^{\psi-\varphi_j}dV_{\omega}\leq C_{\psi}\int_{X_j}|u_j|^2_{\omega,h}e^{-\varphi_j-\psi}dV_{\omega}<\infty$$
Hence by (\ref{36}) and (\ref{37}), we get
$$\int_{X_j}|u_j|^2_{\omega,h}e^{\psi-\varphi_j}dV_{\omega}\leq\frac{1+\frac{1}{t}}{1-(1+t)r}\int_{X_j}\langle[\Theta,\Lambda_{\omega}]^{-1}v,v\rangle _{\omega,h}e^{\psi-\varphi_j}dV_{\omega}$$
We take $t=\frac{1}{\sqrt{r}}-1$ to make the above coefficient get the minimum $\frac{1}{(1-\sqrt{r})^2}$.

Therefore for each $j$, we have 
$$\int_{X_j}|u_j|^2_{\omega,h}e^{\psi-\varphi_j}dV_{\omega}\leq\frac{1}{(1-\sqrt{r})^2}\int_{X}\langle[\Theta,\Lambda_{\omega}]^{-1}v,v\rangle _{\omega,h}e^{\psi-\varphi}dV_{\omega}<\infty$$
Hence by a diagonal argument, we may assume $\{u_j\}$ weakly converges to $u$ in any Hilbert spaces $L^2_{(n,0)} (X_k,\omega|_{X_k},E|_{X_k},he^{\psi-\varphi_k})$ and also in $L^2_{loc}$ space. Hence we have
$$\bar\partial u=v$$
on $X$. And for each fixed $k$, we have
\begin{equation}
\begin{split}
\int_{X_k} |u|_{\omega,h}^2 e^{\psi-\varphi_k} dV_{\omega}
\leq&\liminf \limits_{j \rightarrow + \infty}\int_{X_k}  |u_j|_{\omega,h}^2 e^{\psi - \varphi_k}dV_{\omega} 
\\\leq&\liminf \limits_{j \rightarrow + \infty}\int_{X_j}  |u_j|_{\omega,h}^2 e^{\psi - \varphi_j}dV_{\omega} 
\\\leq&\frac{1}{(1-\sqrt{r})^2}\int_X \langle[\Theta,\Lambda_{\omega}]^{-1}v,v\rangle _{\omega,h}e^{\psi - \varphi}  dV_{\omega}
\end{split}
\end{equation}
Hence we have 
\begin{equation}
\begin{split}
\int_{X}|u|_{\omega,h}^2 e^{\psi-\varphi} dV_{\omega}\leq&\liminf \limits_{k\to\infty}\int_X\chi_{X_k}|u|_{\omega,h}^2 e^{\psi-\varphi_k} dV_{\omega}
\\=&\liminf \limits_{k\to\infty}\int_{X_k} |u|_{\omega,h}^2 e^{\psi-\varphi_k} dV_{\omega}
\\\leq&\frac{1}{(1-\sqrt{r})^2}\int_X \langle[\Theta,\Lambda_{\omega}]^{-1}v,v\rangle _{\omega,h}e^{\psi - \varphi}  dV_{\omega}
\end{split}
\end{equation}
Therefore we have completed the proof of Proposition \ref{Berndtsson}.
\end{proof}

Now we come to the proof of Theorem\ref{main3}.
\begin{proof}(of Theorem\ref{main3})
Because $X$ is hyperconvex, there exists a smooth strictly psh function $\phi<0$ on $X$ such that $\phi^{-1}(-\infty,c)$ are relatively compact in $X$ for any negative $c$.

We take $\varphi=\psi=-\log(-\phi)$ and $r=\frac{1}{2}$, $\Theta:=i\partial\psi\wedge \bar\partial\psi$ in Proposition \ref{Berndtsson}. Then we have
\begin{equation}
\begin{split}    ri\partial\bar\partial(\varphi+\psi)=&i\partial\bar\partial\varphi
\\=&-\frac{i\partial\bar\partial\phi}{\phi}+\frac{i\partial\phi\wedge\bar\partial\phi}{\phi^2}
\\\geq&i\partial\psi\wedge \bar\partial\psi=\Theta
\end{split}
\end{equation}

Given any $F\in A^2(X,K_X\otimes E,h)$, we take $$v_j=\bar\partial((\chi_j\circ\varphi)F)=(\chi_j'\circ\varphi)\bar\partial\varphi\wedge F$$   
where $\chi_j$ is a smooth function on $\mathbb{R}$ such that $\chi_j(t)=1$ for $t\leq j$, $\chi_j(t)=0$ for $t\geq j+1$ and $\sup|\chi_j'|\leq 2$. 

Then $v_j$ is a smooth $E$-valued $\bar\partial$-closed $(n,1)$-form on $X$. By the equality (\ref{37}), we also have for any $j,k$
$$\langle[\Theta,\Lambda_{\omega}]^{-1}v_j,v_j\rangle _{\omega,h_k}=|\chi_j'\circ\varphi|^2|F|^2_{\omega,h_k}\leq 4|F|^2_{\omega,h_k}$$
Therefore by Proposition\ref{Berndtsson}, for any $j,k\in\mathbb{Z}_+$, there exists an $E$-valued $(n,0)$-form $u_{j,k}$ on $X$ such that $\bar\partial u_{j,k}=v_j$ and
\begin{equation}
\begin{split}
\int_X|u_{j,k}|^2_{\omega,h_k}dV_{\omega}\leq&\frac{1}{(1-\sqrt{\frac{1}{2}})^2}\int_X\langle[\Theta,\Lambda_{\omega}]^{-1}v_j,v_j\rangle_{\omega,h_k}dV_{\omega}
\\\leq&\frac{4}{(1-\sqrt{\frac{1}{2}})^2}\int_{\{j<\varphi<j+1\}}|F|^2_{\omega,h_k}dV_{\omega}
\end{split}
\end{equation}
We define 
$$F_{j,k}=(\chi_j\circ\varphi)F-u_{j,k}$$
By the assumption (2) of Theorem\ref{main3}, we know that $h_j\geq h$ and $\det h_j$ converges to $\det h$ almost everywhere. Hence by Theorem\ref{strong open v.b.}, $|F|^2_{\omega,h_k}$ converges to $|F|^2_{\omega,h}$ in $L^1_{loc}$ sense.

Hence for any $\epsilon>0$, there exists $j\in\mathbb{Z}_+$ such that 
\begin{equation}\label{42}
\int_{\{\varphi>j\}}|F|^2_{\omega,h}dV_{\omega}<\epsilon
\end{equation}
and $k_j\in\mathbb{Z}_+$ such that
\begin{equation}\label{43}
\int_{\{\varphi<j+1\}}|F|^2_{\omega,h_{k_j}}dV_{\omega}<\infty
\end{equation}
and
\begin{equation}\label{44}
\int_X|u_{j,k_j}|^2_{\omega,h_{k_j}}dV_{\omega}\leq\frac{4}{(1-\sqrt{\frac{1}{2}})^2}\int_{\{j<\varphi<j+1\}}|F|^2_{\omega,h_{k_j}}dV_{\omega}
<\frac{4}{(1-\sqrt{\frac{1}{2}})^2}\epsilon
\end{equation}
By the inequalities (\ref{42}), (\ref{43}) and (\ref{44}), we know that
$$F_{j,k_j}\in A^2(X,K_X\otimes E,h_{k_j})$$
and
$$\int_X|F_{j,k_j}-F|^2_{\omega,h}dV_{\omega}\leq 2(\int_{\{\varphi>j\}}|F|^2_{\omega,h}dV_{\omega}+\int_X|u_{j,k_j}|^2_{\omega,h}dV_{\omega})<2(1+\frac{4}{(1-\sqrt{\frac{1}{2}})^2})\epsilon$$
Therefore we have proved that $\cup_{j=1}^{\infty}A^2(X,K_X\otimes E,h_j)$is dense in $A^2(X,K_X\otimes E,h)$.
\end{proof}

\end{document}